\newtheorem{theorem}{Theorem}[section]
\newtheorem{corollary}[theorem]{Corollary}
\newtheorem{lemma}[theorem]{Lemma}
\newtheorem{proposition}[theorem]{Proposition}
\theoremstyle{definition}
\newtheorem{definition}[theorem]{Definition}
\newtheorem{remark}[theorem]{Remark}
\newtheorem{example}[theorem]{Example}
\newtheorem{question}[theorem]{Question}
 \newcommand{\Map}{{\rm Map}}
\newcommand{\rk}{{\rm rk}}
\newcommand{\mrk}{{\rm mrk}}
\newcommand{\mrks}{{\rm mrk_\Sigma}}
\newcommand{\nv}{{\rm nv}}
\newcommand{\ocap}{{\rm ocap}}
\newcommand{\mdim}{{\rm mdim}}
\newcommand{\mdimsm}{\mdim_{\Sigma, \rM}}
\newcommand{\mdims}{{\rm mdim_{\Sigma}}}
\newcommand{\Wdim}{{\rm Wdim}}
\newcommand{\Widim}{{\rm Widim}}
\newcommand{\rM}{{\rm M}}
\newcommand{\Sym}{{\rm Sym}}
 \newcommand{\cB}{{\mathcal B}}
  \newcommand{\cD}{{\mathcal D}}
    \newcommand{\cE}{{\mathcal E}}
  \newcommand{\cF}{{\mathcal F}}
 \newcommand{\cM}{{\mathcal M}}
  \newcommand {\cV}{{\mathcal V}}
    \newcommand {\cW}{{\mathcal W}}
 \newcommand{\cZ}{{\mathcal Z}}
 \newcommand{\bN}{{\mathbb N}}
 \newcommand{\bR}{{\mathbb R}}
 \newcommand{\bZ}{{\mathbb Z}}
 \newcommand{\ZG}{{\mathbb Z \Gamma}}
 \newcommand{\hcM}{\widehat{\mathcal M}}
 \newcommand{\sA}{{\mathscr A}}
\newcommand{\sB}{{\mathscr B}}
\newcommand{\sF}{{\mathscr F}}
\newcommand{\sM}{{\mathscr M}}
\newcommand{\sU}{{\mathscr U}}
\begin{document}

\title{Naive mean dimension}

\author[B.~Liang]{Bingbing Liang}
\author[K.~Yan]{Kesong Yan}

\address{\hskip-\parindent
B.L., School of Mathematical Sciences, Soochow University,
Suzhou 215006, China.}
\email{bbliang@suda.edu.cn}

\address{\hskip-\parindent
K.Y., School of Mathematics and Statistics, Hainan Normal University, Haikou, Hainan, 571158, China.}
\email{ksyan@mail.ustc.edu.cn}

\subjclass[2020]{Primary 37B02, 37A35, 22B05, 20C07.}
\keywords{naive mean dimension,  sofic mean dimension, naive mean rank, naive metric mean dimension}

\begin{abstract}
We investigate the dynamical property of the naive mean dimension for continuous actions of any countable group on compact metrizable spaces. It is shown that naive mean dimension serves as an upper bound of sofic mean dimension for actions of nonamenable groups. For algebraic actions we obtain more satisfactory results by looking at the naive version of mean rank for modules over integral group rings. We also consider the naive metric mean dimension and investigate its relations with  sofic metric mean dimension.
\end{abstract}

\maketitle


\section{Introduction}

Naive entropies  (both measure-theoretic and topological) are  natural dynamical invariants for actions of any countable group. Bowen first studied it for measure-preserving actions and showed that for many nonamenable groups, the measures of zero naive entropy are generic \cite{B16} and  sofic entropy for smooth actions of a slighter bigger group (countable group with an infinite cyclic subgroup of infinite index) is non-positive \cite{B20}. It is known that for ergodic measure-preserving actions naive entropy serves as an upper bound for Rokhlin entropy introduced by Seward \cite{B20, S19b, S19a}. Burton studied the naive topological entropy as a topological counterpart for continuous actions and proved that naive topological entropy serves as an upper bound of sofic entropy \cite{B17}. Taking advantage of the variational principle, Downarowicz, Frej, and Romagnoli proved that for continuous actions of amenable groups, naive topological entropy coincides with the classical topological entropy \cite{DFR16}. Li and Rong showed that the positivity of naive entropy provides a sufficient criterion for Li-Yorke chaos \cite{LR21}.

In this paper, we are interested in studying a naive version of another dynamical invariant---mean dimension,  called {\it naive mean dimension} (Definition \ref{nmdim}),  for continuous actions of any countable group on compact metrizable spaces. Mean dimension was introduced by Gromov and was immediately applied to the embedding problem of dynamical systems \cite{G99m, LW00}. There have been some important progress of mean dimension theory in topological dynamics \cite{L99, G15, GLT16, GQT19, GT20, YCZ22, CDZ22}. The notion of mean dimension was extended to the actions of sofic groups \cite{Li13}. Sofic groups, introduced by Gromov, encompass a broad class of groups that includes amenable groups and residually finite groups \cite{G99s}. Further applications and connections can be found in geometric measure theory \cite{MT15, T18}, $L^2$-invariant theory \cite{LL18, LL19, L19}, classification of $C^\ast$-algebras \cite{EZ17, N21}, and information theory. \cite{LT18, LT19, T20}.

Our first main result compares the naive mean dimension and sofic mean dimension for actions of sofic groups, whose proof combines the method from \cite{B17} and technique from \cite{Li13}.
\begin{theorem} \label{main 1}
Let $\Gamma \curvearrowright X$ be any continuous action of a sofic group $\Gamma$ on a compact metrizable space. If the naive mean dimension of $\Gamma \curvearrowright X$ is zero, then the sofic mean dimension of $\Gamma \curvearrowright X$ is nonpositive.
\end{theorem}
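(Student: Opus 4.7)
The plan is to adapt Burton's approach to naive versus sofic entropy \cite{B17} to the polyhedral framework used for sofic mean dimension in \cite{Li13}. Fix a compatible metric $\rho$ on $X$ and $\epsilon>0$. To conclude $\mathrm{mdim}_\Sigma(\Gamma,X)\le 0$ it suffices to exhibit, for every $\eta>0$, a finite $F\subset\Gamma$ and $\delta>0$ satisfying
\begin{equation*}
\limsup_{i\to\infty}\frac{1}{d_i}\,\Widim_\epsilon\bigl(\mathrm{Map}(\rho,F,\delta,\sigma_i),\rho_2\bigr)\le \eta .
\end{equation*}

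I would begin by fixing a finite open cover $\mathcal{U}$ of $X$ of $\rho$-mesh less than $\epsilon/4$. The hypothesis $\mathrm{mdim}^{\mathrm{naive}}(\Gamma,X)=0$ supplies a finite $F\subset\Gamma$ with $D:=\mathcal{D}(\mathcal{U}^F)<\eta|F|$, where $\mathcal{U}^F=\bigvee_{s\in F}s^{-1}\mathcal{U}$. Taking the nerve of a refinement of $\mathcal{U}^F$ of order $D$, together with a subordinate partition of unity, yields a continuous map $\alpha:X\to|K|$ to a polyhedron of dimension at most $D$ such that $\alpha(x)=\alpha(y)$ forces $x,y$ to lie in a common member of $\mathcal{U}^F$; in particular $\rho_F(x,y):=\max_{s\in F}\rho(sx,sy)<\epsilon/4$.

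Then I would carry out the combinatorial step on the sofic side. Provided $\delta$ is small relative to $\epsilon$ and $|F|$, any $\sigma=\sigma_i$ that is a sufficiently good sofic approximation admits, via a greedy packing, a subset $J=\{j_1,\ldots,j_N\}\subset[d_i]$ whose $F$-translates $\sigma_F(j_k):=\{\sigma_s(j_k):s\in F\}$ are pairwise disjoint and exhaust $[d_i]$ outside a set of relative density $o(1)$ as $\delta\to 0$; in particular $N\le d_i/|F|$. The continuous map
\begin{equation*}
\Psi:\mathrm{Map}(\rho,F,\delta,\sigma_i)\longrightarrow|K|^N,\qquad \Psi(\phi)=\bigl(\alpha(\phi(j_1)),\ldots,\alpha(\phi(j_N))\bigr),
\end{equation*}
takes values in a polyhedron of dimension $ND\le\eta d_i$.

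The heart of the argument is to verify that $\Psi$ is $\epsilon$-injective for $\rho_2$. If $\Psi(\phi)=\Psi(\phi')$, then for every $k$ we have $\rho(s\phi(j_k),s\phi'(j_k))<\epsilon/4$ for all $s\in F$. Combining this with the $L^2$-near-equivariance of $\phi$ and $\phi'$ on each translate $\sigma_F(j_k)$ and using disjointness of the translates, the triangle inequality bounds $\rho(\phi(j),\phi'(j))$ by roughly $\epsilon/2$ on $\bigcup_k\sigma_F(j_k)$, with an $\ell^2$ error controlled by $\delta$; on the residual set, which has relative density $o(1)$, one uses the finite diameter of $(X,\rho)$ to absorb the contribution into $\rho_2$. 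Choosing $\delta$ small enough yields $\rho_2(\phi,\phi')<\epsilon$, hence the desired $\epsilon$-injectivity. Consequently $\Widim_\epsilon\le\eta d_i$, and letting $\eta\to 0$ gives the theorem. The main technical difficulty lies precisely in this last step: combining the $|F|$ pointwise errors from almost-equivariance across the $N$ packing centres, and reconciling the $\rho_2$ bound with the conventions of \cite{Li13}. Burton's proof in \cite{B17} handles the analogous counting for entropy, and the mean-dimension version replaces cardinalities with polyhedron dimensions.
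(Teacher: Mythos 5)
Your overall strategy (graft Burton's naive-vs-sofic entropy argument onto Li's polyhedral framework) is the right one and is what the paper does, but the proposal breaks down at exactly the step that is the crux of the theorem. You assert that a greedy packing produces pairwise disjoint \emph{full} translates $\sigma(F)(j_k)$ exhausting $[d_i]$ up to density $o(1)$, so that $N\le d_i/|F|$. For a nonamenable $\Gamma$ this is unjustified: maximality of a greedy family only forces the $\sigma(F^{-1}F)$-neighbourhoods of the centres to cover $[d_i]$, so the covered fraction is only bounded below by roughly $|F|/|F^{-1}F|$, which tends to $0$ for the relevant $F$ (e.g.\ balls in a free group). The paper's Lemma \ref{assigning} (Burton's packing lemma) instead produces disjoint \emph{partial} translates $\sigma(F_k)c_k$ with $F_k\subseteq F$ and only $|F_k|\ge\tau|F|/2$, covering $(1-\tau)d$. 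With partial tiles your dimension count $N\cdot D\le \eta d_i$ collapses, because $N$ can be as large as $2d_i/(\tau|F|)$ while each tile still a priori costs $D(\sU^F)$. The fix requires the two-step choice of parameters in the paper: first pick $\tau$ with $\tau\,\Wdim_\varepsilon(X,\rho)\le\beta$, \emph{then} pick $F$ with $\Wdim_{\varepsilon/3}(X,\rho_F)\le\tau\beta|F|$, so that for every $F'\subseteq F$ with $|F'|\ge\tau|F|/2$ one gets $\Wdim_{\varepsilon/3}(X,\rho_{F'})\le 2\beta|F'|$, and the per-tile cost is proportional to the tile size. Your proposal fixes $F$ from the naive hypothesis alone and has no mechanism to absorb the loss from partial tiles.

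There is a second gap in the injectivity step. Sofic mean dimension (Definition \ref{top def of mdim}, following \cite{Li13}) is computed with $\Wdim_\varepsilon(\Map(\rho,F,\delta,\sigma_i),\rho_\infty)$, and since $\rho_2\le\rho_\infty$ an $(\varepsilon,\rho_\infty)$-embedding is automatically an $(\varepsilon,\rho_2)$-embedding, so bounding $\Wdim_\varepsilon(\cdot,\rho_2)$ is strictly weaker than what is needed; the two versions are not shown to coincide. Your plan to "absorb into $\rho_2$" both the residual set $\cZ$ and the exceptional coordinates where the $L^2$-near-equivariance fails pointwise does not survive the passage to $\rho_\infty$: on those coordinates you have no control at all on $\rho(\varphi(v),\varphi'(v))$. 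The paper handles this with two extra factors in the target of $\Phi$: a copy of $Q^{\cZ}$ (with $Q$ realizing $\Wdim_\varepsilon(X,\rho)$, contributing $\le\tau d\,\Wdim_\varepsilon(X,\rho)\le\beta d$ — another reason $\tau$ must be chosen against $\Wdim_\varepsilon(X,\rho)$ first), and the set $Z$ of $[0,1]^{\sU}$-valued tuples vanishing on $(1-|F|\delta)d$ coordinates, into which the map $\overrightarrow{h}$ records partition-of-unity data precisely at the coordinates where equivariance fails by more than $\kappa$; $\dim(Z)\le|\sU||F|\delta d$ is small, and this is what rescues $\rho_\infty$-injectivity. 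You would need to add both devices (or an equivalent) to make the argument close.
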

In particular, since the naive mean dimension can only take values of either zero or infinity for actions of nonamenable groups (Proposition \ref{range}), we conclude that  naive mean dimension provides an upper bound for sofic mean dimension in the context of nonamenable group actions. This relationship highlights a significant distinction in comparison to actions of amenable groups, where it is clear by definition that mean dimension serves as an upper bound for naive mean dimension. It is then natural to ask
\begin{question}
    Let $\Gamma \curvearrowright X$ be any continuous action of a sofic group $\Gamma$ on a compact metrizable space. Is it true that naive mean dimension of $\Gamma \curvearrowright X$ bounds above the mean dimension of $\Gamma \curvearrowright X$?
\end{question}

For actions of full shifts we can give an affirmative answer by indirect computation (Propositions \ref{nonamenable example} and \ref{amenable example}). For {\it an algebraic action $\Gamma \curvearrowright X$}, that is, $\Gamma \curvearrowright X$ is a continuous action by group automorphisms on a compact metrizable abelian group, we can instead take advantage of Pontryagin duality $\widehat{X}$ of $X$ to consider its algebraic counterpart $\widehat{X}$. This guides our attention to the notion of {\it naive mean rank} for modules over the integral group ring $\ZG$ as a naive version of mean rank introduced in \cite{LL18} (see Definition \ref{nmrk def}).  It turns out that naive mean rank is easier to study and we obtain a complete result on the comparison between naive mean rank and sofic mean rank, the latter of which was introduced in \cite{LL19}.
\begin{theorem} \label{nmrk compare}
    Let $\Gamma$ be a sofic group (not necessarily infinite) and $\cM$ any $\ZG$-module. Then the naive mean rank of $\cM$ bounds above the sofic mean rank of $\cM$. 
\end{theorem}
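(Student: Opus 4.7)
The strategy is to unpack both definitions and reduce to a packing argument on sofic approximations, after first cutting $\cM$ down to a finitely generated submodule. Both $\mrk(\cM)$ and $\mrks(\cM)$ are, by design, defined as suprema of the corresponding invariants over finitely generated submodules $\cM_0\subseteq\cM$; hence it suffices to establish $\mrks(\cM)\le \mrk(\cM)$ under the additional assumption that $\cM$ is finitely generated, say by a finite set $A=\{a_1,\ldots,a_k\}$.

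Given such $\cM$ and $A$, fix an arbitrary finite subset $F\subseteq\Gamma$ with $e\in F$, and set $n:=\rk_{\bZ}(\bZ F\cdot A)$, the $\bZ$-rank of the finitely generated abelian group $\bZ F\cdot A\subseteq\cM$. The key step is to prove the orbit-level bound $\mrks(\cM)\le n/|F|$. Once this is established, taking the infimum over all such $F$ yields $\mrks(\cM)\le \mrk(\cM)$, by the definition of $\mrk$ as the infimum of $\rk_{\bZ}(\bZ F\cdot A)/|F|$ over finite $F$ (and the supremum over finitely generated submodules, already absorbed).

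To prove the orbit-level bound, fix a sofic approximation $\Sigma=\{\sigma_i:\Gamma\to\mathrm{Sym}(d_i)\}_{i\in\bN}$. For any $\delta>0$ and any $F'\supseteq F\cdot F^{-1}$, soficity guarantees that the set of ``$(F',\delta)$-good'' points $v\in[d_i]$---those at which $s\mapsto\sigma_i(s)v$ is injective on $F'$ and intertwines multiplication within $F'$---has density tending to $1$. A standard greedy packing (or a Hall-type matching) extracts pairwise disjoint $\sigma_i(F)$-translates whose union covers all but $o(d_i)$ points of $[d_i]$, giving $(1+o(1))d_i/|F|$ such $F$-blocks. Any $\sigma_i$-approximately equivariant microstate $\varphi$ appearing in the definition of $\mrks(\cM)$, when restricted to a single $F$-block, factors up to vanishingly small error through the inclusion $\bZ F\cdot A\hookrightarrow\cM$; hence its rank-contribution on each block is at most $n$. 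Summing across the $(1+o(1))d_i/|F|$ blocks and absorbing the residual $o(d_i)$ points gives a total rank bound of $(n/|F|)d_i+o(d_i)$ for $\varphi$. Dividing by $d_i$, then passing to $\limsup_i$, $\delta\to 0$, and $F'\to\Gamma$ yields $\mrks(\cM)\le n/|F|$, as required.

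The principal obstacle is exactly the step ``on an $F$-block a microstate factors through $\bZ F\cdot A$'': soficity only provides approximate equivariance, so the errors must be tracked uniformly enough that they do not inflate the rank estimate by more than a vanishing fraction of $d_i$. This is where the flexibility to choose $F'$ large and $\delta$ small relative to $F$ is used, in the spirit of the argument of \cite{B17} adapted to rank in place of entropy, combined with the sofic microstate formalism of \cite{Li13}. The finite-group case $|\Gamma|<\infty$ allowed by the statement is a trivial corner---take $F=\Gamma$ so the packing is exact and both sides reduce to $\rk_{\bZ}(\cM)/|\Gamma|$---but is worth acknowledging, since the asymptotic machinery otherwise degenerates.
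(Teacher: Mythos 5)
There is a genuine gap at the packing step, and it is precisely the obstruction that forces the paper to argue differently. You claim that a greedy (or Hall-type) argument extracts pairwise disjoint \emph{full} translates $\sigma_i(F)c_k$ covering all but $o(d_i)$ points of $[d_i]$, so that there are about $d_i/|F|$ blocks each contributing rank at most $n=\rk(\sA^F)$. For a nonamenable group no such tiling by full translates is available: maximality of a greedy packing only tells you that every uncovered point is \emph{blocked}, which yields the weaker conclusion (the paper's Lemma \ref{assigning}, following Burton) that one can cover $(1-\tau)d$ by disjoint translates $\sigma(F_k)c_k$ of \emph{subsets} $F_k\subseteq F$ with $|F_k|\ge\tau|F|/2$ only. (For constant-degree regular uniform hypergraphs, which is what the full-translate problem amounts to, near-perfect matchings need not exist, and neither greedy nor Hall nor the nibble applies.) With partial tiles the count of blocks can be as large as $2d/(\tau|F|)$, and the only available per-block bound is $\rk(\sA^{F_k})\le\rk(\sA^F)=n$, giving a total of order $nd/(\tau|F|)$ rather than $nd/|F|$. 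Since $\tau$ must be taken small, this loses a factor $1/\tau$ and your orbit-level bound $\mrks(\cM)\le n/|F|$ does not follow. Indeed, if the orbit-level bound were provable this way, the same argument would prove $h_\Sigma\le h^{\nv}$ at the orbit level, which is exactly what Burton's theorem does \emph{not} assert.

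The missing idea is the dichotomy of Proposition \ref{nmrk basic 2}: for nonamenable $\Gamma$ one has $\mrk^\nv(\cM)\in\{0,+\infty\}$, so the theorem reduces to showing that $\mrk^\nv(\cM)=0$ forces $\mrks(\cM)=0$ (the amenable case being handled by the known equality $\mrks(\cM)=\mrk(\cM)=\mrk^\nv(\cM)$). In that regime one may choose $F$ with $\rk(\sA^F)\le\tau\beta|F|$, and then even a partial tile $F_k$ with $|F_k|\ge\tau|F|/2$ satisfies $\rk(\sA^{F_k})\le\rk(\sA^F)\le 2\beta|F_k|$, so the partial tiling of Lemma \ref{assigning} suffices. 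This is how the paper's proof absorbs the $1/\tau$ loss; without first invoking the zero-or-infinity dichotomy your estimate cannot close.
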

Taking advantage of a bridge between naive mean rank and naive mean dimension, we obtain:
\begin{corollary} \label{main 2}
    For any algebraic action $\Gamma \curvearrowright X$ of a sofic group (not necessarily infinite), we have naive mean dimension of $\Gamma \curvearrowright X$ bounds above the sofic mean dimension of $\Gamma \curvearrowright X$.
\end{corollary}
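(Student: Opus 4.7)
The plan is to reduce the statement to a purely algebraic comparison at the level of modules over $\ZG$, where Theorem \ref{nmrk compare} supplies exactly the inequality we need. Given an algebraic action $\Gamma \curvearrowright X$, the Pontryagin dual $\cM := \widehat{X}$ is a countable $\ZG$-module, with the $\Gamma$-module structure induced dually from the action by continuous group automorphisms. All of the dynamical content of $\Gamma \curvearrowright X$ is faithfully encoded in $\cM$, so the strategy is to translate both sides of the inequality through this duality.

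First, I would invoke the bridge between naive mean dimension and naive mean rank alluded to in the introduction (and which is established earlier in the paper from the definitions): for every algebraic action $\Gamma \curvearrowright X$ of a sofic group, the naive mean dimension of $\Gamma \curvearrowright X$ equals the naive mean rank of $\cM = \widehat{X}$. Concretely, one evaluates $\Wdim$ on finite-dimensional subspaces of $C(X)$ arising from real or complex characters of the quotients $X \to \widehat{N}$ for finitely generated submodules $N \subseteq \cM$; linear algebra reduces the widim computation to the linear dimension of these character subspaces, which is controlled by the rank of the corresponding $\ZG$-quotient. Taking the naive infimum/supremum on both sides gives the identity.

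Second, for the sofic side one appeals to the parallel identity from \cite{LL19}: the sofic mean dimension of $\Gamma \curvearrowright X$ coincides with the sofic mean rank of $\widehat{X}$ (this is the main structural theorem of that paper, and it holds for sofic groups, whether infinite or finite). Combining the two bridges with Theorem \ref{nmrk compare} yields
\begin{equation*}
\mdim(\Gamma \curvearrowright X) \; = \; \mrk(\cM) \; \geq \; \mrks(\cM) \; = \; \mdims(\Gamma \curvearrowright X),
\end{equation*}
which is the desired inequality, where I have written $\mdim$ and $\mrk$ for the naive versions and the subscript $\Sigma$ for the sofic ones.

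The only real obstacle is making sure the bridge between naive mean dimension and naive mean rank goes through cleanly for every $\ZG$-module (not merely finitely generated ones) and without any countability hypothesis on $\Gamma$ beyond soficity; this requires a direct-limit argument, reducing to finitely generated submodules of $\cM$ and using the monotonicity of naive mean dimension/rank under inverse limits of algebraic actions and direct limits of modules. Once that reduction is in place the proof is essentially an assembly of Theorem \ref{nmrk compare} with the two bridge identities, and the corollary follows immediately.
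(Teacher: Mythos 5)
Your proposal is correct and follows essentially the same route as the paper: the printed proof combines Theorem \ref{sofic coincide} ($\mdims(X)=\mrks(\widehat{X})$), Theorem \ref{bridge equality} ($\mdim^\nv(\hcM)=\mrk^\nv(\cM)$), and Theorem \ref{nmrk compare} in exactly the chain you write. The direct-limit/finitely-generated-submodule reduction you flag as the ``only real obstacle'' is already absorbed into the paper's Theorem \ref{bridge equality}, which is stated for arbitrary $\ZG$-modules.
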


We also investigate the naive counterpart of metric mean dimension, referred to as {\it naive metric mean dimension}. There are two natural questions arising:
\begin{question}
    Does the naive metric mean dimension take a value of either zero or infinity for any action of nonamenable groups? Does zero naive metric mean dimension imply nonpositive sofic metric mean dimension?
\end{question}

It turns out that there is a new obstruction to studying these fundamental questions. Unlike naive topological entropy, the naive metric mean dimension quantifies the decay rate of naive topological entropy at the  $\varepsilon$-scale, which we may refer to as the {\it naive $\varepsilon$-entropy}.  If naive $\varepsilon$-entropy decays fast enough to zero, we can use the technique for the naive mean dimension to show sofic mean dimension is nonpositive (Theorem \ref{decaying case}). For algebraic actions, we can take advantage of naive mean rank to have a complete comparison between naive metric mean dimension and sofic metric mean dimension (Proposition \ref{algebraic action comparision}).

Since the small boundary property indicates that the systems have zero mean dimension, in Section 7, we discuss a naive version of the small boundary property that implies the systems have zero naive mean dimension.

We organize the paper as follows. In Section 2, we mainly recall the notions of amenable groups, sofic groups, and algebraic actions. In Section 3, we introduce the notion of naive mean dimension, discuss its interesting properties, and prove Theorem \ref{main 1}. In Section 4, we introduce the notion of naive mean rank and prove Theorem \ref{nmrk compare}. In Section 5, we consider the notion of naive metric mean dimension and discuss its relation with previous naive invariants. We end the paper by discussing the notion of naive small boundary property in Section 6.



\medskip
\noindent{\it Acknowledgments.}  
The first author is supported by NSFC grant 12271387. The second author is supported by NSFC grant 12171175. The first author would like to thank Professor Xiaojun Huang for a stimulating question.

\section{Preliminaries}
\numberwithin{equation}{section}
\setcounter{equation}{0}

 Throughout this paper $\Gamma$ is always a countable discrete group with identity element $e_\Gamma$. Denote by $\cF(\Gamma)$ the collection of nonempty finite subsets of $\Gamma$.

By a {\it (topological) dynamical system}, denoted by $\Gamma \curvearrowright X$, we mean a continuous action of a discrete group $\Gamma$ on a compact metrizable space $X$.  A {\it factor map} $\pi \colon X \to Y$  between two dynamical systems $\Gamma \curvearrowright X$ and $\Gamma \curvearrowright Y$, is a surjective continuous $\Gamma$-equivariant map.  
\subsection{Amenable and sofic groups} The group $\Gamma$ is called {\it amenable} if for every $K\in \cF(\Gamma)$ and every $\delta>0$ there exists an $F\in \cF(\Gamma)$ with $|KF\setminus F|<\delta |F|$.  


For a $\bR$-valued function $\varphi$ defined on $\cF(\Gamma)$, we say that   {\it $\varphi(F)$ converges to $c\in \bR$ when $F\in \cF(\Gamma)$ becomes more and more left
invariant}, denoted by 
$$\lim_F\varphi(F)=c,$$
if for any $\varepsilon>0$ there are some $K \in \cF(\Gamma)$ and $\delta > 0$ such that  $|\varphi(F)-c|<\varepsilon$ for all $F \in \cF(\Gamma)$ satisfying $|KF\setminus F|<\delta |F|$.

The following strong version of Ornstein-Weiss lemma is mainly due to Gromov \cite[1.3.1]{G99m}\cite[Theorem 4.38, Theorem 4.48]{KLb} \cite[Theorem 9.4.1]{C15}.
\begin{lemma} \label{OW}
Let  $\varphi \colon \cF(\Gamma)  \to [0, +\infty)$ be a function satisfying\\
\begin{enumerate}
    \item $\varphi(Fs)=\varphi(F)$ for all $F \in \cF(\Gamma)$ and $s \in \Gamma$; \\
    \item $\varphi(F_1\cup F_2) \leq \varphi(F_1) + \varphi(F_2)$ for all $F_1, F_2 \in \cF(\Gamma)$.\\
\end{enumerate}
Then the limit $\lim_F\varphi(F)/|F|$ exists. Moreover if $\varphi$ is strongly subadditive in the sense that
$$\varphi(F_1\cup F_2) \leq \varphi(F_1) + \varphi(F_2)-\varphi(F_1\cap F_2)$$
for all $F_1, F_2 \in \cF(\Gamma)$, we have
$$\lim_F\frac{\varphi(F)}{|F|}=\inf_{F \in \cF(\Gamma)} \frac{\varphi(F)}{|F|}.$$
\end{lemma}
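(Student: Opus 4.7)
The plan is to reduce to the Ornstein--Weiss quasi-tiling theorem: for every $\varepsilon>0$ there exist $T_1\subseteq\cdots\subseteq T_n\in\cF(\Gamma)$, each sufficiently left invariant, such that any sufficiently left invariant $F\in\cF(\Gamma)$ admits $\varepsilon$-disjoint right-translates $T_{k_j}c_j\subseteq F$ covering at least $(1-\varepsilon)|F|$. I would quote this as a black box, since it is precisely the ingredient bundled in \cite{KLb, C15}.

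To prove existence of $\lim_F\varphi(F)/|F|$, set $\alpha=\liminf_F\varphi(F)/|F|$; this is finite because iterating (1) and (2) on the decomposition of $F$ into singletons gives $\varphi(F)\leq|F|\varphi(\{e_\Gamma\})$. For $\varepsilon>0$ I would invoke quasi-tiling with the additional property $\varphi(T_i)/|T_i|\leq\alpha+\varepsilon$, which is possible because sets with near-optimal ratio are cofinal in the Følner filter (choose a near-minimizer and then enlarge it within its own Følner sequence). Then for sufficiently invariant $F$, conditions (1) and (2) yield
\[
\varphi(F) \;\leq\; \sum_j\varphi(T_{k_j}c_j) + \varphi\Bigl(F\setminus\bigcup_j T_{k_j}c_j\Bigr) \;=\; \sum_j \varphi(T_{k_j}) + \text{boundary term},
\]
and combining $\sum_j|T_{k_j}|\leq|F|/(1-\varepsilon)$ (from $\varepsilon$-disjointness) with a boundary of size at most $\varepsilon|F|$ (from the $(1-\varepsilon)$-cover) gives $\varphi(F)/|F|\leq(\alpha+\varepsilon)/(1-\varepsilon)+\varepsilon\varphi(\{e_\Gamma\})$. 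Letting $\varepsilon\to 0$ forces $\limsup\leq\alpha$, so the limit exists.

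For the strongly subadditive case, I would instead show $\limsup_F\varphi(F)/|F|\leq\varphi(T)/|T|$ for each fixed $T\in\cF(\Gamma)$; combined with the trivial $\varphi(F)/|F|\geq\inf$, this pins the limit to the infimum. The idea is to run quasi-tiling with $T_1=T$ (iterated to a nested sequence of super-tiles) and to invoke strong subadditivity in the form $\varphi(E_1\cup E_2)+\varphi(E_1\cap E_2)\leq\varphi(E_1)+\varphi(E_2)$ inductively, so that the overlap contributions from the not-quite-disjoint tile translates cancel against their $\varphi$-values on intersections; this replaces the loose factor $1/(1-\varepsilon)$ by an error controlled by the small pairwise intersection sizes, which vanishes as $\varepsilon\to 0$.

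The main obstacle is the quasi-tiling theorem itself, whose delicate inductive construction I do not propose to reproduce; a secondary bookkeeping issue is selecting the quasi-tiles to be simultaneously left invariant and near-optimal for $\varphi(\cdot)/|\cdot|$, which is handled by restricting the quasi-tiling construction to the sub-net of near-minimizers within a single Følner sequence.
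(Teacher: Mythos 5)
First, a point of reference: the paper does not prove Lemma \ref{OW} at all --- it is quoted from Gromov, Kerr--Li, and Coornaert --- so your attempt can only be measured against the standard proofs in those sources. Your first half (existence of the limit) is essentially the standard quasitiling proof of the Ornstein--Weiss lemma and is correct in outline: the bound $\varphi(F)\leq|F|\varphi(\{e_\Gamma\})$ from (1) and (2), the cofinality of near-minimizers of $\varphi(\cdot)/|\cdot|$ in the F{\o}lner filter, and the estimate $\varphi(F)/|F|\leq(\alpha+\varepsilon)/(1-\varepsilon)+\varepsilon\varphi(\{e_\Gamma\})$ all check out. The bookkeeping you flag (making the tower $T_1\subseteq\cdots\subseteq T_n$ simultaneously nested, invariant, and near-minimizing) is genuinely needed but fixable: replace each $T_k$ by its union with a vastly larger invariant near-minimizer and use (2) to see the ratio is still within $2\varepsilon$ of $\alpha$.

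The second half has a real gap. A single tile $T$ cannot $\varepsilon$-quasitile arbitrarily invariant sets in a general amenable group --- that is precisely why Ornstein--Weiss need a tower $T_1\subseteq\cdots\subseteq T_n$ of roughly $\log\varepsilon/\log(1-\varepsilon)$ tiles, each round of which only covers an $\varepsilon$-fraction of what remains. Once you are forced to use the tower, the upper bound you produce involves $\varphi(T_k)/|T_k|$ for the \emph{higher} tiles $k\geq 2$, which are constrained only by invariance and are in no way controlled by $\varphi(T)/|T|$ for your fixed $T$; your sketch never closes this loop. Moreover, the mechanism you propose for using strong subadditivity --- cancelling the overlaps of $\varepsilon$-disjoint translates --- attacks the wrong error term: the factor $1/(1-\varepsilon)$ already tends to $1$ and is harmless. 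Indeed, if a covering argument of your shape worked, it would establish the infimum rule for merely subadditive invariant $\varphi$, which is false in general (this failure is the starting point of \cite{DFR16}). The standard proof of the ``moreover'' clause does not use quasitiling at all: strong subadditivity (submodularity) yields a Shearer-type inequality $\varphi(F)\leq\frac{1}{|T|}\sum_{c\in T^{-1}F}\varphi(F\cap Tc)$ via the monotonicity of the increments $\varphi(A\cup\{s\})-\varphi(A)$ in $A$; the terms with $Tc\subseteq F$ each contribute $\varphi(T)$ by (1) and number at most $|F|$, while the remaining $c$ form a boundary set that is negligible once $F$ is sufficiently $(T\cup T^{-1})$-invariant, giving $\varphi(F)/|F|\leq\varphi(T)/|T|+o(1)$ directly. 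You should replace your second paragraph by this argument (or an explicit citation of it).
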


For every $d \in \bN$ write $[d]$ for the set $\{1, 2, \dots, d\}$ and $\Sym(d)$ for the symmetric group over $[d]$. Given a map $\sigma \colon \Gamma \to \Sym(d)$ we adopt the convention $\sigma(s)(v)=\sigma_s(v)$ for every $s \in \Gamma$ and $v \in [d]$.  
\begin{definition}
A sequence of maps $\Sigma=\{\sigma_i: \Gamma \to {\rm \Sym} (d_i)\}_{i \in \bN}$  is called a {\it sofic approximation} for $\Gamma$  if it satisfies:
\begin{enumerate}
\item $\lim_{i\to \infty}|\{v\in [d_i]: \sigma_{i,s}\sigma_{i,t}(v)=\sigma_{i, st}(v)\}|/d_i=1$ for all $s, t\in \Gamma$,

\item $\lim_{i\to \infty}|\{v\in [d_i]: \sigma_{i, s}(v)\neq \sigma_{i,t}(v)\}|/d_i=1$ for all distinct $s, t\in \Gamma$,

\item $\lim_{i\to \infty} d_i=+\infty$.
\end{enumerate}
The group $\Gamma$ is called {\it sofic} if it admits a sofic approximation. 
\end{definition}
We shall frequently say that $\sigma$ is  {\it good enough } for $\Gamma$ in the sense that for some small $\tau > 0$  and large $F \in \cF(\Gamma)$ (to be specified in the context), the inequality $|\{v \in [d]: \sigma_s\sigma_t(v)=\sigma_{st}(v)\}|/d > 1-\tau$ holds for every $s, t \in F$, and inequality $|\{v \in [d]: \sigma_s(v) \neq \sigma_t(v)\}| > 1-\tau$ holds for every distinct $s, t \in F$.

Every amenable group is sofic since one can use a sequence of asymptotically invariant subsets of the amenable group to construct a sofic approximation. Residually finite groups are also sofic since a sequence of exhausting finite-index subgroups naturally induces a sofic approximation in which case each approximating map is a genuine group homomorphism. We refer the reader to \cite{CC10, CLb} for more information on sofic groups.

\begin{definition}
    Consider a continuous action $\Gamma \curvearrowright X$ and a continuous pseudometric $\rho$ on $X$. Given $F \in \cF(\Gamma), \delta > 0$ and a map $\sigma \colon \Gamma \to \Sym(d)$ for some $d \in \bN$, we set  
$$\Map(\rho, F, \delta, \sigma)= \{\varphi \colon [d] \to X:  \rho_2(s\varphi, \varphi \circ \sigma(s)) \leq \delta {\rm  \ for \ every \ } s \in F   \},$$
where  the pseudometric $\rho_2$ on $X^{[d]}$  is defined via
$$\rho_2(\varphi, \psi)=\left(\frac{1}{d}\sum_{v \in [d]} \rho(\varphi(v), \psi(v))^2\right)^{1/2}.$$
\end{definition}
Note that $\Map(\rho, F, \delta, \sigma)$ is a closed subset of $X^{[d]}$ and its elements can be interpreted as approximately equivariant maps from $[d]$ to $X$. Meanwhile, the pseudometric $\rho_{X, \infty}$ on $X^{[d]}$ is defined via
$$\rho_\infty(\varphi, \psi)=\max_{v \in [d]} \rho(\varphi(v), \psi(v)).$$

The following estimation will be used twice in the sequel \cite[Lemma 2.8]{Li13}.
\begin{lemma} \label{Map lowerbound}
  Let $\rho$ be a continuous pseudometric on $X$, $F \in \cF(\Gamma)$ and $\delta >0$. Consider any map $\sigma \colon \Gamma \to \Sym(d)$. Then for each $\varphi \in \Map(\rho, F, \delta, \sigma)$, we have
  $$|\{v \in [d]: \rho(s\varphi(v), \varphi(\sigma_s(v))) \leq \sqrt{\delta}\} |\geq (1-\delta)d.$$
\end{lemma}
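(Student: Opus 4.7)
The plan is to apply a discrete Chebyshev/Markov-type inequality to the nonnegative function $v \mapsto \rho(s\varphi(v), \varphi(\sigma_s(v)))$. The hypothesis $\varphi \in \Map(\rho, F, \delta, \sigma)$ is, by the very definition of the pseudometric $\rho_2$, an $\ell^2$-type bound on this function, while the desired conclusion is a level-set bound of the kind Chebyshev is designed to extract from such integrated information. So the structure of the proof is dictated entirely by the definitions.

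More concretely, I would fix an arbitrary $s \in F$ (the quantification over $s$ is inherited from $\Map(\rho,F,\delta,\sigma)$) and set $f(v) := \rho(s\varphi(v), \varphi(\sigma_s(v)))$ for $v \in [d]$. Unpacking $\rho_2(s\varphi, \varphi\circ\sigma(s)) \le \delta$ through its definition gives
$$\frac{1}{d}\sum_{v \in [d]} f(v)^2 \;\le\; \delta^2.$$
Now I would introduce the ``bad set'' $B := \{v \in [d] : f(v) > \sqrt{\delta}\}$ and estimate its size by splitting the sum: on $B$ each summand exceeds $\delta$, so
$$\delta \cdot |B| \;<\; \sum_{v \in B} f(v)^2 \;\le\; \sum_{v \in [d]} f(v)^2 \;\le\; \delta^2 d,$$
which forces $|B| < \delta d$. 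Passing to the complement yields $|\{v : f(v) \le \sqrt{\delta}\}| \ge (1-\delta)d$, as required.

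I do not expect any genuine obstacle here; the whole argument is a one-line application of Chebyshev's inequality once the definitions are unwound. The only micro-subtlety is keeping straight that the hypothesis is phrased in $\ell^2$ with threshold $\delta$ whereas the conclusion is an $\ell^\infty$-style level set with threshold $\sqrt{\delta}$, which is precisely why the exponent $2$ in the definition of $\rho_2$ matches up correctly with the square root appearing in the statement.
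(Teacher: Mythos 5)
Your proof is correct, and it is essentially the only argument: the paper does not prove this lemma itself but cites it as \cite[Lemma 2.8]{Li13}, where it is established by exactly this Chebyshev-type estimate applied to the $\ell^2$ bound built into the definition of $\rho_2$. (The only cosmetic remark is that your strict inequality $\delta|B| < \sum_{v\in B} f(v)^2$ requires $B\neq\emptyset$, but the conclusion is trivial when $B=\emptyset$, so nothing is lost.)
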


\subsection{Group rings and algebraic actions}
Let $R$ be a unital ring. The {\it group ring of $\Gamma$ with coefficients in $R$}, denoted by $R \Gamma$, is an extended ring of $R$, consisting of all finitely supported functions $f  \colon \Gamma \to R$. Conventionally, we shall write $f$ as $\sum_{s \in \Gamma} f_s s$, where $f_s \in R$ for all $s \in \Gamma$ and $f_s=0$ for all except finitely many $s \in \Gamma$. 
The addition and multiplication operations on $R \Gamma$ are defined by
$$ \sum_{s\in \Gamma}f_ss+\sum_{s\in \Gamma}g_ss=\sum_{s\in \Gamma}(f_s+g_s)s, \mbox{ and } \big(\sum_{s\in \Gamma}f_s s\big)\big(\sum_{t\in \Gamma}g_tt\big)=\sum_{s, t\in \Gamma}f_sg_t(st).$$
We shall mainly consider the case $R=\bZ$. Note that any $\ZG$-module  $\cM$ is simply an abelian group carrying a $\Gamma$-action by group automorphisms.

Recall that for every locally compact Hausdorff abelian group $G$, the {\it Pontryagin dual} $\widehat{G}$ of $G$ consists of all continuous group homomorphisms from $G$ to the unit circle $\bR/\bZ$. Under the operation of pointwise multiplication and the compact-open topology $\widehat{G}$ is again a locally compact Hausdorff group. By the classical Pontryagin duality theory, $\widehat{G}$ is metrizable if and only if $G$ is countable. In particular, for any countable $\ZG$-module $\cM$, treated as a discrete abelian group, its Pontryagin dual $\hcM$ is a compact metrizable abelian group.  Moreover, the algebraic $\Gamma$-action on $\cM$ naturally induces a topological dynamical system $\Gamma \curvearrowright \hcM$ by continuous group automorphisms defined via 
$$s\chi(u): = \chi(s^{-1}u)$$
for all $\chi \in \hcM, u \in \cM$, and $s \in \Gamma$.
Conversely, by Pontryagin duality, each continuous action of $\Gamma$ on a compact metrizable abelian group by group automorphisms arise this way and thus we call such a dynamical system an  {\it algebraic action} \cite{Sch95}.

\section{Naive mean dimension}
\numberwithin{equation}{section}
\setcounter{equation}{0}

In this section, we introduce the notion of naive mean dimension for actions of any countable discrete group, discuss its basic properties, and establish its relation with sofic mean dimension. 

Let $X, P$ denote two compact metrizable spaces and $\rho$ a continuous pseudometric on $X$. Fix $\varepsilon > 0$. Recall that a continuous map $f \colon X \to P$ is an {\it $(\varepsilon, \rho)$-embedding} if for every $x, x' \in X$ with $f(x)=f(x')$, one has $\rho(x, x') < \varepsilon$. 
For a closed subset $Z$ of $X$ denote by $\Wdim_\varepsilon(Z, \rho)$ the minimal (covering) dimension $\dim (P)$ of a compact metrizable space $P$ that admits an $(\varepsilon, \rho)$-embedding from $Z$ to $P$. 
For each $F \in \cF(\Gamma)$ it induces a larger compatible metric $\rho_F$ on $X$ via
$$\rho_F(x, x'):=\max_{s \in F} \rho(sx, sx').$$

\begin{definition} \label{nmdim}
Let $\Gamma \curvearrowright X$ be a continuous action of any countable group $\Gamma$ on a compact metrizable space $X$. Set
$$\mdim_\varepsilon^\nv(\Gamma \curvearrowright X, \rho)=\inf_{F \in \cF(\Gamma)} \frac{\Wdim_\varepsilon(X, \rho_F)}{|F|}.$$
When $\rho$ is a compatible metric we define the {\it naive mean (topological) dimension of $\Gamma \curvearrowright X$} as 
$$\mdim^\nv(\Gamma \curvearrowright X):=\sup_{\varepsilon > 0} \mdim_\varepsilon^\nv(\Gamma \curvearrowright X, \rho).$$
\end{definition}

Clearly $\mdim^\nv(\Gamma \curvearrowright X)$ is independent of the choice of compatible metric $\rho$.  If the space $X$ and the acting group $\Gamma$ is understood in the context, we may write $\mdim_\varepsilon^\nv(\rho)$ and $\mdim^\nv(X)$ for $\mdim_\varepsilon^\nv( \Gamma \curvearrowright X, \rho)$ and $\mdim^\nv(\Gamma \curvearrowright X)$ respectively in short. 

\begin{remark} \label{Hausdorff case}
    Using the approach of Lindenstrasuss and Weiss \cite[Definition 2.6]{LW00}, one can also generalize the above definition to the setting of any continuous action of any discrete group on a compact Hausdorff space via finite open covers.
\end{remark}

As $\Gamma$ is amenable, we can apply Lemma \ref{OW} to the function $F \mapsto \Wdim_\varepsilon(X, \rho_F)$ to get a well-defined notation:
$$ \mdim_\varepsilon(\rho):= \lim_F \frac{\Wdim_\varepsilon(X, \rho_F)}{|F|}.$$
\begin{definition}
    For a  continuous action $\Gamma \curvearrowright X$ of an amenable group, the {\it mean (topological) dimension} of $\Gamma \curvearrowright X$ is defined as 
$$\mdim(\Gamma \curvearrowright X):=\sup_{\varepsilon > 0} \mdim_\varepsilon(\rho).$$
\end{definition}

Using the similar argument of \cite[Theorem 6]{B16}, we have
\begin{proposition} \label{range}
    If $\Gamma$ is nonamenable, then for any continuous action $\Gamma \curvearrowright X$, one has $\mdim^\nv(\Gamma \curvearrowright X) \in \{0, +\infty\}$.
\end{proposition}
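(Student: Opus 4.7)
The plan is to adapt the dichotomy argument that Bowen used for naive entropy, via a metric-absorption trick that lets us avoid a lossy subadditivity step.

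\smallskip

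\noindent\textbf{Step 1 (nonamenability as anti-F\o lner).} Negating the paper's definition of amenability, I fix $K \in \cF(\Gamma)$ and $\delta > 0$ such that $|KF \setminus F| \geq \delta |F|$ for every $F \in \cF(\Gamma)$. Enlarging $K$ to contain $e_\Gamma$ (which only makes the condition stronger), I get $F \subseteq KF$, so $|KF| \geq \lambda |F|$ with $\lambda := 1 + \delta > 1$, for every $F \in \cF(\Gamma)$.

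\smallskip

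\noindent\textbf{Step 2 (absorbing $K$ into the metric).} Fix a compatible metric $\rho$ on $X$ and set $\rho'(x,y) := \rho_K(x,y) = \max_{k \in K}\rho(kx,ky)$. Since $K$ is finite, each $k \in K$ acts by homeomorphisms, and $e_\Gamma \in K$, one checks that $\rho'$ is again a compatible metric on $X$. The key observation is the identity
$(\rho')_F(x,y) = \max_{s \in F}\max_{k \in K}\rho(ksx,ksy) = \rho_{KF}(x,y)$
for every $F \in \cF(\Gamma)$.

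\smallskip

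\noindent\textbf{Step 3 (the contraction).} Suppose for contradiction that $M := \mdim^\nv(\Gamma \curvearrowright X)$ is finite; I aim to prove $M = 0$. By independence of $\mdim^\nv$ from the choice of compatible metric, $\sup_{\varepsilon > 0}\mdim_\varepsilon^\nv(\rho') = M$, so for each $\varepsilon > 0$ we have $\mdim_\varepsilon^\nv(\rho') \leq M$. Given any $\eta > 0$, there thus exists $F \in \cF(\Gamma)$ with $\Wdim_\varepsilon(X,\rho_{KF})/|F| \leq M + \eta$. Dividing by $|KF|$ instead of $|F|$ and using $|KF| \geq \lambda |F|$,
$\Wdim_\varepsilon(X,\rho_{KF})/|KF| \leq (M+\eta)/\lambda$.
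Since $KF$ is an admissible test set for the infimum defining $\mdim_\varepsilon^\nv(\rho)$, this gives $\mdim_\varepsilon^\nv(\rho) \leq (M+\eta)/\lambda$. Letting $\eta \to 0$ and taking the supremum over $\varepsilon > 0$ yields $M \leq M/\lambda$, and because $\lambda > 1$ and $M < \infty$, we conclude $M = 0$.

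\smallskip

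\noindent\textbf{Where the subtlety lies.} The most natural first attempt, via subadditivity of $F \mapsto \Wdim_\varepsilon(X,\rho_F)$, gives $\Wdim_\varepsilon(X,\rho_{KF}) \leq |K|\,\Wdim_\varepsilon(X,\rho_F)$ and hence only $M \leq (|K|/\lambda)\,M$, which is vacuous since $|KF| \leq |K|\,|F|$ forces $|K|/\lambda \geq 1$. The essential idea is that replacing $\rho$ by the equivalent metric $\rho_K$ moves the combinatorics of $K$ out of the numerator and into the denominator via $|KF|$, turning the $|K|$ factor into a genuine contraction by $\lambda > 1$. This is the mean-dimension analogue of the cover-refinement step $\mathcal{V} = \mathcal{U}^K$ in Bowen's naive entropy dichotomy.
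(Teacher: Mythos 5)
Your proof is correct and is essentially the paper's argument in contrapositive form: both hinge on the identity $(\rho_K)_F=\rho_{KF}$ (absorbing a finite set into the metric), the metric-independence of $\mdim^\nv$, and the expansion $|KF|\geq\lambda|F|$ furnished by nonamenability. The only cosmetic difference is that the paper invokes the stronger fact (from Ceccherini-Silberstein--Coornaert) that the expansion constant can be taken arbitrarily large, so a positive value is directly forced to be $+\infty$, whereas you make do with a single $\lambda>1$ and close the loop via the contraction $M\leq M/\lambda$.
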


\begin{proof}
    Suppose that there exists $\varepsilon_0 > 0$  such that $\mdim_{\varepsilon_0}^\nv(\rho):=c > 0$. Fix any $\rM > 0$. Since $\Gamma$ is not amenable, by \cite[Theorem 4.9.2]{CC10}, there exists $S \in \cF(\Gamma)$ such that $\frac{|SF|}{|F|} \geq M/c$ for every $F \in \cF(\Gamma)$. Then 
    \begin{align*}
        \mdim_{\varepsilon_0}^\nv(\rho_S) &=\inf_{F \in \cF(\Gamma)} \frac{\Wdim_{\varepsilon_0}(X, \rho_{SF})}{|F|} \\
        &=\inf_{F \in \cF(\Gamma)} \frac{|SF|}{|F|} \frac{\Wdim_{\varepsilon_0}(X, \rho_{SF})}{|SF|} \\
        &\geq \inf_{F \in \cF(\Gamma)} \frac{|SF|}{|F|} c=\rM.
    \end{align*}
Thus we obtain
$$\mdim^\nv(\Gamma \curvearrowright X) \geq \mdim_{\varepsilon_0}^\nv(\rho_S) \geq \rM.$$
\end{proof}

Observe that if a countable group $\Gamma$ admits an element $s$ with infinite order, then $\mdim(\bZ \curvearrowright^s X) \geq \mdim^\nv(\Gamma \curvearrowright X)$. Moreover, we can easily discuss the relation between naive mean dimension of subgroup actions as in \cite[Lemma 3.13]{B20}. 

\begin{proposition} \label{subgroup discussion}
      Let $\Gamma$ be a group with an infinite index subgroup $H$ and $\Gamma \curvearrowright X$ any continuous action on a compact metrizable space.
Let $H \curvearrowright X$ be the action from the restriction of  $\Gamma \curvearrowright X$.
Then we have the following.
\begin{itemize}
    \item[(1)] If $\mdim^\nv(H \curvearrowright X) < \infty$, then $\mdim^\nv(\Gamma \curvearrowright X)=0$;\\
    \item[(2)] If $H$ is amenable and $\mdim(H \curvearrowright X) < \infty$, then $\mdim^\nv(\Gamma \curvearrowright X)=0$.
\end{itemize}
\end{proposition}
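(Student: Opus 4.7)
The plan is to reduce (2) to (1) and then prove (1) by exploiting $[\Gamma:H]=\infty$ to tile with disjoint coset translates of carefully chosen finite subsets of $H$.

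For the reduction of (2) to (1): when $H$ is amenable, Lemma~\ref{OW} applied to the translation-invariant subadditive function $F \mapsto \Wdim_\varepsilon(X,\rho_F)$ on $\cF(H)$ guarantees that $\lim_F |F|^{-1}\Wdim_\varepsilon(X,\rho_F) = \mdim_\varepsilon(H \curvearrowright X, \rho)$ exists. Since the infimum of a convergent net is never greater than its limit, $\mdim_\varepsilon^\nv(H \curvearrowright X,\rho) \leq \mdim_\varepsilon(\rho)$ for every $\varepsilon > 0$, so $\mdim^\nv(H \curvearrowright X) \leq \mdim(H \curvearrowright X) < \infty$, and (2) reduces to (1).

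For (1), we fix a compatible metric $\rho$ on $X$, set $d := \mdim^\nv(H \curvearrowright X) < \infty$, and fix $\varepsilon, \eta > 0$; the aim is to produce $F' \in \cF(\Gamma)$ with $|F'|^{-1}\Wdim_\varepsilon(X,\rho_{F'}) < \eta$. The first step is to pick an integer $n > (d+1)/\eta$ and, using $[\Gamma:H]=\infty$, choose coset representatives $t_1 = e_\Gamma, t_2, \dots, t_n \in \Gamma$ lying in pairwise distinct left cosets of $H$. Each map $x \mapsto t_i x$ is a homeomorphism of the compact space $X$, so uniform continuity combined with the finiteness of $\{t_i\}$ produces a single $\delta > 0$ satisfying $\rho(x,y) < \delta \Rightarrow \rho(t_i x, t_i y) < \varepsilon$ for every $i \in [n]$. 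Next, using $\mdim_\delta^\nv(H \curvearrowright X, \rho) \leq d$, we pick $F \in \cF(H)$ with $|F|^{-1}\Wdim_\delta(X, \rho_F) \leq d+1$ and set $F' := \bigsqcup_{i=1}^n t_i F$, a disjoint union of cardinality $n|F|$ because $F \subset H$ and the $t_i$ lie in distinct cosets of $H$.

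The crucial estimate is that any $(\delta,\rho_F)$-embedding $f \colon X \to P$ is automatically an $(\varepsilon,\rho_{F'})$-embedding: $f(x) = f(y)$ yields $\rho(hx,hy) < \delta$ for every $h \in F$, which by the choice of $\delta$ upgrades to $\rho(t_i h x, t_i h y) < \varepsilon$ for all $i$ and $h$, i.e.\ $\rho_{F'}(x,y) < \varepsilon$. Consequently $\Wdim_\varepsilon(X,\rho_{F'}) \leq \Wdim_\delta(X,\rho_F)$, so $|F'|^{-1}\Wdim_\varepsilon(X,\rho_{F'}) \leq (d+1)/n < \eta$; since $\varepsilon, \eta$ were arbitrary, $\mdim^\nv(\Gamma \curvearrowright X) = 0$. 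The only mildly delicate step is securing a single modulus of continuity $\delta$ uniform over the $n$ translations; this is unproblematic precisely because $n$ is finite, and no uniform equicontinuity of the full $\Gamma$-action is needed. Structurally the argument parallels the proof of Proposition~\ref{range}, with the coset-spreading supplied by $[\Gamma:H]=\infty$ playing the role that nonamenability did there, and it mirrors Bowen's treatment of naive entropy in \cite[Lemma~3.13]{B20}.
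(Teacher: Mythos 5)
Your proof is correct and follows essentially the same route as the paper: both exploit the infinite index of $H$ to pick $n$ representatives of pairwise distinct left cosets and test the $\Gamma$-action on product sets $\bigsqcup_{i} t_i F$ with $F \in \cF(H)$, gaining the factor $1/n$. The only cosmetic differences are that the paper packages your uniform-continuity step as metric-independence of $\mdim^\nv$ applied to the auxiliary compatible metric $\rho_K$, and treats case (2) by repeating the argument rather than by your (equally valid) reduction $\mdim^\nv(H \curvearrowright X) \leq \mdim(H \curvearrowright X)$.
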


\begin{proof}
 We only prove the case (1) provided that the case (2) can be proved in the same way.  Fix a compatible metric $\rho$ on $X$ and $\varepsilon > 0$. Choose $K \in  \cF(\Gamma)$ such that $sH\cap s'H=\emptyset$ for every distinct $s, s' \in K$. Pick a sequence  $F_n \in  \cF(H)$ such that 
 $$\mdim_\varepsilon^\nv(H\curvearrowright X, \rho_K)=\lim_{n \to \infty} \frac{\Wdim_\varepsilon(X, \rho_{KF_n})}{|F_n|}.$$
Thus

\begin{align*}
    \mdim_\varepsilon^\nv( \Gamma \curvearrowright X, \rho) &\leq \lim_{n \to \infty} \frac{\Wdim_\varepsilon(X, \rho_{KF_n})}{|KF_n|}\\
    &=\frac{1}{|K|}\mdim_\varepsilon^\nv(H\curvearrowright X, \rho_K)\\
    &\leq \frac{1}{|K|} \mdim^\nv(H \curvearrowright X).
\end{align*}
It concludes that 
$$\mdim^\nv(\Gamma \curvearrowright X) \leq \frac{1}{|K|} \mdim^\nv(H \curvearrowright X).$$
Since $H$ is of infinite index, we can let $|K|$ tend to infinity and the desired equality follows.
\end{proof}

Recall that the {\bf stable dimension} of a compact metrizable space $Z$ is defined as
$${\rm stabdim}(Z):=\inf_{n \in \bN} \frac{\dim (Z^n)}{n}.$$
The continuous action  {\it full (left) shift} $\Gamma \curvearrowright Z^\Gamma$ is defined by 
$$(sx)_t:=x_{s^{-1}t}$$
for every $x=(x_t)_{t \in \Gamma}$ and $s \in \Gamma$.
\begin{proposition} \label{nonamenable example}
Let $K$ be a compact metrizable space with $0 < \dim (K) < \infty$. Suppose that $\dim (K) > 1$ or $K$ is of basic type, i.e. $\dim(K^2)=2\dim(K)$, that is, stable dimension of $K$ is positive. Then for any nonamenable group $\Gamma$ and the associated full shift $\Gamma \curvearrowright K^\Gamma$, one has 
$$\mdim^\nv(\Gamma \curvearrowright K^\Gamma)=\infty.$$
\end{proposition}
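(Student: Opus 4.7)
The plan is to apply Proposition~\ref{range}: since $\Gamma$ is nonamenable, $\mdim^\nv(\Gamma \curvearrowright K^\Gamma) \in \{0,\infty\}$, so it suffices to produce one scale $\varepsilon_0 > 0$ and one compatible metric $\rho$ on $K^\Gamma$ for which $\mdim_{\varepsilon_0}^\nv(\Gamma \curvearrowright K^\Gamma, \rho) > 0$. The idea is to use the pseudometric on $K^\Gamma$ that reads only the coordinate at $e_\Gamma$: translating by $F$ then picks up exactly the $|F|$ distinct coordinate projections, so the infimum over $F \in \cF(\Gamma)$ in Definition~\ref{nmdim} collapses to an infimum over $n \in \bN$, which is already controlled by the classical mean dimension of the $\bZ$-shift on $K^\bZ$.

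Fix a compatible metric $d_K$ on $K$ and put $\rho^0(x,y) := d_K(x_{e_\Gamma}, y_{e_\Gamma})$, a continuous pseudometric on $K^\Gamma$. Using $(sx)_t = x_{s^{-1}t}$ one computes
$$\rho^0_F(x,y) = \max_{s \in F} d_K(x_{s^{-1}}, y_{s^{-1}}),$$
which is the pullback of the sup metric $d_\infty$ on $K^{F^{-1}}$ via the coordinate projection. Consequently $\Wdim_\varepsilon(K^\Gamma, \rho^0_F) = \Wdim_\varepsilon(K^{|F|}, d_\infty)$ depends on $F$ only through $|F|$. Since the Cartesian product of two $(\varepsilon, d_\infty)$-embeddings is again an $(\varepsilon, d_\infty)$-embedding whose codomain has dimension at most the sum, the sequence $a_n(\varepsilon) := \Wdim_\varepsilon(K^n, d_\infty)$ is subadditive in $n$, and Fekete's lemma yields
$$\inf_{F \in \cF(\Gamma)} \frac{\Wdim_\varepsilon(K^\Gamma, \rho^0_F)}{|F|} = \inf_{n \in \bN} \frac{a_n(\varepsilon)}{n} = \lim_{n \to \infty} \frac{a_n(\varepsilon)}{n}.$$
Picking any compatible metric $\tilde\rho$ on $K^\Gamma$, the sum $\rho := \rho^0 + \tilde\rho$ is a compatible metric dominating $\rho^0$; every $(\varepsilon, \rho)$-embedding is then an $(\varepsilon, \rho^0)$-embedding, which gives $\mdim_\varepsilon^\nv(\rho) \geq \lim_n a_n(\varepsilon)/n$.

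It remains to find $\varepsilon_0 > 0$ with $\lim_n a_n(\varepsilon_0)/n > 0$. This quantity is exactly the $\varepsilon_0$-scale mean dimension of the $\bZ$-shift $\bZ \curvearrowright K^\bZ$ computed via the analogous coordinate pseudometric, and therefore lower-bounds $\mdim(\bZ \curvearrowright K^\bZ)$. Under our hypothesis $\mathrm{stabdim}(K) > 0$, and by the classical Lindenstrauss--Weiss machinery \cite{LW00} one has $\mdim(\bZ \curvearrowright K^\bZ) \geq \mathrm{stabdim}(K) > 0$ (in the $\dim(K) > 1$ branch, first extract a $1$-dimensional closed subspace $K_0 \subseteq K$, which is automatically of basic type with $\mathrm{stabdim}(K_0) = 1$, pass to the $\Gamma$-invariant closed subsystem $K_0^\Gamma \subseteq K^\Gamma$, and use monotonicity of $\Wdim_\varepsilon$ under closed subsets to reduce to the basic-type case). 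Hence $\mdim_{\varepsilon_0}^\nv(\rho) > 0$ for some $\varepsilon_0$, and Proposition~\ref{range} upgrades this to $+\infty$. The main obstacle I anticipate is the invocation of the Lindenstrauss--Weiss positivity theorem for basic-type $K$ together with the Mazurkiewicz-type extraction of a $1$-dimensional subcompactum in the $\dim(K) > 1$ case; once these classical inputs are granted, the collapse from $\cF(\Gamma)$ to $\bN$ via the single-coordinate pseudometric and Fekete's lemma does the rest.
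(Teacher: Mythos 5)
Your strategy is sound and lands in the same place, but the route is genuinely different from the paper's. The paper applies Tsukamoto's lemma \cite[Lemma 3.1]{T19} directly to every $F\in\cF(\Gamma)$ to get $\Widim_{\varepsilon_0}(K^\Gamma,\widetilde{\rho}_F)\ge |F|(\dim K-1)$, hence $\mdim^\nv(K^\Gamma)\ge\dim K-1$, and disposes of the basic-type case with the purely formal observation that $\mdim^\nv(X)=\mdim^\nv(X^2)$ for nonamenable $\Gamma$ (an immediate consequence of Proposition \ref{range} and subadditivity), applied to $(K^2)^\Gamma\cong(K^\Gamma)^2$. You instead collapse the infimum over $\cF(\Gamma)$ to an infimum over $\bN$ via the single-coordinate pseudometric; this is a nice observation, and since $\rho=\rho^0+\tilde\rho$ dominates $\rho^0$ you only use the pseudometric for a \emph{lower} bound, so the caveat of Remark \ref{pseudo example} is not an obstruction here.

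Two repairs are needed. First, a direction error: you say $\lim_n a_n(\varepsilon_0)/n$ ``lower-bounds $\mdim(\bZ\curvearrowright K^\bZ)$'' and then deduce its positivity from $\mdim(K^\bZ)\ge{\rm stabdim}(K)>0$; but $A\le B$ together with $B>0$ does not give $A>0$. What you actually need is $\sup_{\varepsilon}\lim_n a_n(\varepsilon)/n\ge\mdim(K^\bZ)$, which holds because the coordinate pseudometric is dynamically generating for the $\bZ$-shift and mean dimension of $\bZ$-actions can be computed from any dynamically generating pseudometric --- or, more economically, skip the $\bZ$-shift entirely and quote \cite[Lemma 3.1]{T19} to get $a_n(\varepsilon_0)\ge n(\dim K-1)$ directly (and $a_{2n}(\varepsilon_0)\ge n(\dim(K^2)-1)$ in the basic-type case). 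Second, your reduction of the $\dim K>1$ branch to the basic-type branch invokes two classical facts (existence of a $1$-dimensional closed subset, and that every $1$-dimensional compactum is dimensionally full-valued); both are true, but the latter is a genuine theorem of cohomological dimension theory, and the paper's product trick avoids it entirely. With these adjustments the argument is complete.
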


\begin{proof}
   By Proposition \ref{range} it suffices to show $\mdim^\nv(\Gamma \curvearrowright K^\Gamma)> 0$. Fix a compatible metric $\rho$ on $K$ and an enumeration of the elements of $\Gamma$ as $s_1=e_\Gamma, s_2, \dots$. It induces a compatible metric $\widetilde{\rho}$ on $K^\Gamma$ by
     \begin{equation} \label{compatible induced}
      \widetilde{\rho}(x, y)=\sum_{n=1}^\infty \frac{1}{2^n} \rho(x_{s_n}, y_{s_n}).
 \end{equation}

Denote by $\Widim_\varepsilon(\rho)$ the minimal (covering) dimension $\dim (P)$ of a polyhedron $P$ that admits an $(\varepsilon, \rho)$-embedding from $X$ to $P$.  It is an exercise that naive mean dimension can be equivalently defined via the parameter $\Widim_\varepsilon(\rho)$.
   Since $K$ is of finite dimension,  by \cite[Lemma 3.1]{T19}, there exists $\varepsilon_0 > 0$ such that for every $F \in \cF(\Gamma)$, one has\footnote{Write the nation in Lemma \cite[Lemma 3.1]{T19} as $\Wdim'_\varepsilon(X, \rho)$. It is readily verified that $\Wdim'_\varepsilon(X, \rho)=\Wdim_\varepsilon(X, \rho)$.}
    $$\Widim_{\varepsilon_0}(K^F, \rho_F) \geq |F|(\dim (K)-1).$$
    It follows that for any $0 < \varepsilon < \varepsilon_0$, one has
    $$\Widim_\varepsilon(K^\Gamma, \widetilde{\rho}_F) \geq \Widim_\varepsilon(K^F, \rho_F) \geq |F|(\dim (K)-1)$$
    and hence 
 \begin{equation} \label{lower bound}
     \mdim^\nv(\Gamma \curvearrowright K^\Gamma) \geq \dim(K)-1.
 \end{equation}   
 Thus if $\dim (K) >1$, we obtain $\mdim^\nv(\Gamma \curvearrowright K^\Gamma)> 0$.

   Now we assume that $K$ is of basic type.  By the definition of naive mean dimension, for any dynamical system $\Gamma \curvearrowright X$ and the induced dynamical system $\Gamma \curvearrowright X^n$ by diagonal actions, we have
   $$\mdim^\nv(X) \leq \mdim^\nv(X^n)\leq n\mdim^\nv(X).$$
   Since $\Gamma$ is nonamenable, by Proposition \ref{range}, we obtain $\mdim^\nv(X)=\mdim^\nv(X^n)$ for every $n \in \bN$.
In particular, we have 
$$\mdim^\nv(K^\Gamma)=\mdim^\nv((K^2)^\Gamma) \geq \dim(K^2)-1 > 0.$$
   
\end{proof}


\begin{proposition} \label{amenable example}
    As $\Gamma$ is infinitely amenable, for any compact metrizable space $K$ of finite dimension, we have
    $$\mdim^\nv(K^\Gamma)={\rm stabdim}(K)=\mdim(K^\Gamma).$$
\end{proposition}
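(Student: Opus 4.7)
The plan is to establish the chain
$$\mdim^\nv(K^\Gamma) \leq \mdim(K^\Gamma) \leq {\rm stabdim}(K) \leq \mdim^\nv(K^\Gamma),$$
which forces all three quantities to agree. The leftmost inequality is automatic for amenable $\Gamma$: for any continuous action we have $\mdim_\varepsilon^\nv(\rho) = \inf_F \Wdim_\varepsilon(X,\rho_F)/|F|$, and taking $F$ along a Folner sequence shows that this infimum is bounded by the limit $\mdim_\varepsilon(\rho)$.

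For the lower bound $\mdim^\nv(K^\Gamma) \geq {\rm stabdim}(K)$, the idea is to bootstrap the Tsukamoto inequality already used in the proof of Proposition \ref{nonamenable example} through a product trick. For each $n \in \bN$ the same argument applied to the finite dimensional space $K^n$ yields $\mdim^\nv((K^n)^\Gamma) \geq \dim K^n - 1$, assuming $\dim K > 0$ (the case $\dim K = 0$ is trivial since then ${\rm stabdim}(K) = 0$). Identifying $(K^n)^\Gamma$ with the diagonal action on $(K^\Gamma)^n$ and using the product bound $\mdim^\nv(Y^n) \leq n \cdot \mdim^\nv(Y)$, obtained by taking componentwise products of $(\varepsilon,\rho_F)$-embeddings target by target, one concludes
$$\mdim^\nv(K^\Gamma) \geq \frac{\dim K^n - 1}{n} \quad \text{for every } n.$$
Since $\dim K^{m+n} \leq \dim K^m + \dim K^n$, Fekete's lemma gives $\dim K^n/n \to {\rm stabdim}(K)$, and letting $n \to \infty$ yields the bound.

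For the upper bound $\mdim(K^\Gamma) \leq {\rm stabdim}(K)$, fix $\varepsilon > 0$ and use the compatible metric $\widetilde{\rho}$ from (\ref{compatible induced}). Choose $M \in \bN$ with $\sum_{n > M} 2^{-n}\cdot {\rm diam}_\rho(K) < \varepsilon/2$, so that truncating $\widetilde{\rho}$ at level $M$ loses at most $\varepsilon/2$. For $F \in \cF(\Gamma)$, set $F' := F^{-1}\{s_1,\dots,s_M\}$; expanding $\widetilde{\rho}_F(x,y) = \max_{s \in F}\widetilde{\rho}(sx,sy)$ and keeping only the first $M$ terms shows that whenever $x|_{F'} = y|_{F'}$ one has $\widetilde{\rho}_F(x,y) \leq \varepsilon/2$, so the coordinate projection $K^\Gamma \to K^{F'}$ is an $(\varepsilon,\widetilde{\rho}_F)$-embedding, giving
$$\Wdim_\varepsilon(K^\Gamma,\widetilde{\rho}_F) \leq \dim K^{|F'|}.$$
Evaluating along a left Folner sequence $(F_n)$, the Folner condition forces $|F'_n|/|F_n| \to 1$ (via $|S^{-1}F_n \setminus F_n|/|F_n| \to 0$ after taking inverses), while $|F'_n| \to \infty$ and Fekete's lemma again give $\dim K^{|F'_n|}/|F'_n| \to {\rm stabdim}(K)$; multiplying the two gives $\mdim_\varepsilon(K^\Gamma) \leq {\rm stabdim}(K)$, and taking $\sup$ over $\varepsilon$ closes the chain.

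The main technical point, I expect, is verifying that the projection in the upper bound truly serves as an $(\varepsilon,\widetilde{\rho}_F)$-embedding with the stated dependence on $F'$, and that along a left Folner sequence one obtains $|F'_n|/|F_n| \to 1$ while $|F'_n| \to \infty$ simultaneously. Both are routine but demand careful bookkeeping of left- and right-translation in the definition of $\widetilde{\rho}_F$.
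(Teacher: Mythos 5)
Your proof is correct and follows essentially the same route as the paper: the sandwich $\mdim^\nv(K^\Gamma)\leq \mdim(K^\Gamma)\leq {\rm stabdim}(K)\leq \mdim^\nv(K^\Gamma)$, with the lower bound obtained exactly as in the paper by applying the Tsukamoto estimate (\ref{lower bound}) to $K^n$, identifying $(K^n)^\Gamma$ with $(K^\Gamma)^n$, and using subadditivity of $\mdim^\nv$ under products. The only difference is that for the middle inequality $\mdim(K^\Gamma)\leq{\rm stabdim}(K)$ you give a direct (and correct) projection-onto-$K^{F'}$ argument, whereas the paper simply cites \cite[Corollary 10.5.5]{C15}.
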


\begin{proof}
    From estimation (\ref{lower bound}), for every $n \in \bN$ we have
    $$\mdim^\nv((K^n)^\Gamma) \geq \dim (K^n)-1.$$
    Note that the full shift $(K^n)^\Gamma$ is isomorphic to the product system $(K^\Gamma)^n$. It follows that
    $n\mdim^\nv(K^\Gamma) \geq \mdim^\nv((K^\Gamma)^n)\geq \dim (K^n)-1$. Thus
    $$\mdim(K^\Gamma)\geq \mdim^\nv(K^\Gamma) \geq {\rm stabdim}(K).$$
Since $\Gamma$ is infinite,  from \cite[Corollary 10.5.5]{C15}, we have
$$\mdim^\nv(K^\Gamma) \leq \mdim(K^\Gamma) \leq {\rm stabdim}(K).$$
\end{proof}

\begin{remark} \label{pseudo example} 
   A notable difference between naive mean dimension and mean dimension is that the computation of naive mean dimension cannot be simplified to the dynamically generating pseudometric framework. Recall that a continuous pseudometric $\rho$ on a compact metrizable space $X$ carrying a  continuous action $\Gamma \curvearrowright X$ is called {\it dynamically generating} if 
   $$\sup_{s \in \Gamma}\rho(sx, sy) >0$$
   for any distinct element $x, y \in X$.
   Consider the dynamically generating pseudometric $\rho$ on $[0, 1]^\Gamma$ by $\rho(x, y)=|x_e-y_e|$. Then for every $0 < \varepsilon < 1$ it is clear  that for the full shift action $\Gamma \curvearrowright [0, 1]^\Gamma$, one has
    $$\mdim_\varepsilon^\nv(\Gamma \curvearrowright[0,1]^\Gamma, \rho) \leq 1$$ 
    and hence $\sup_{\varepsilon > 0} \mdim_\varepsilon^\nv(\Gamma \curvearrowright [0,1]^\Gamma, \rho) \leq 1$. However, by Proposition \ref{nonamenable example}, whenever $\Gamma$ is nonamenable, we have $\mdim^\nv(\Gamma \curvearrowright [0, 1]^\Gamma)=\infty$.
\end{remark}


Now we recall the concepts of sofic mean dimension and establish its relationship  with naive mean dimension. 
Throughout the rest of this section, $\Gamma$ denotes a sofic group with a fixed sofic approximation $\Sigma=\{\sigma_i \colon \Gamma \to \Sym(d_i)\}_{i=1}^\infty$.

\begin{definition}\cite[Definition 2.4]{Li13} \label{top def of mdim}
 For every $\varepsilon> 0, F \in \cF(\Gamma)$, set
$$\mdim_\Sigma^\varepsilon(\rho, F, \delta):=\varlimsup_{i\to \infty}\frac{1}{d_i}\Wdim_\varepsilon(\Map(\rho, F, \delta, \sigma_i), \rho_\infty).$$
If  $\Map(\rho, F, \delta, \sigma_i)$ is empty for all sufficiently large $i$, we set 
$\mdim_\Sigma^\varepsilon(\rho, F, \delta)=-\infty.$
Write
$$\mdim_\Sigma^\varepsilon(\rho):=\inf_{F \in \cF(\Gamma)} \inf_{\delta > 0}  \mdim_\Sigma^\varepsilon(\rho, F, \delta).$$
When $\rho$ is a compatible metric, the  {\it sofic mean (topological) dimension} of $\Gamma \curvearrowright X$ is defined as 
$$\mdim_\Sigma(\Gamma \curvearrowright X):=\sup_{\varepsilon > 0} \mdim_\Sigma^\varepsilon(\rho).$$
\end{definition}
By compactness of $X$ the sofic mean dimension is independent of the choice of compatible metric $\rho$.  

The following result provides a complete computation of the sofic mean dimension for full shifts, a topic initially explored in \cite[Theorem 7.1]{Li13}.
\begin{theorem}\cite[Theorem 1.1]{JQ21}
    Let $K$ be  a compact metrizable space of finite dimension and $\Gamma$ a sofic group with a sofic approximation $\Sigma$. Then for the associated full shift $\Gamma \curvearrowright K^\Gamma$ one has
    $\mdims(K^\Gamma)={\rm stabdim}(K)$.
\end{theorem}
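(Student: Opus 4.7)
The plan is to establish the two inequalities $\mdim_\Sigma(K^\Gamma) \leq {\rm stabdim}(K)$ and $\mdim_\Sigma(K^\Gamma) \geq {\rm stabdim}(K)$ separately. Fix a compatible metric $d_K$ on $K$ and consider the dynamically generating continuous pseudometric $\rho$ on $K^\Gamma$ defined by $\rho(x,y) := d_K(x_{e_\Gamma}, y_{e_\Gamma})$. By Li's standard reduction \cite{Li13}, the sofic mean dimension of $\Gamma \curvearrowright K^\Gamma$ can be computed from any dynamically generating pseudometric on $K^\Gamma$, so we may use $\rho$ in place of a compatible metric in Definition \ref{top def of mdim}. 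The decisive feature of $\rho$ is that the projection $\Pi\colon (K^\Gamma)^{[d]} \to K^{[d]}$ defined by $\Pi(\varphi)(v) := \varphi(v)_{e_\Gamma}$ satisfies the pointwise identity $\rho_\infty(\varphi, \varphi') = d_\infty(\Pi(\varphi), \Pi(\varphi'))$, where $d_\infty$ is the sup metric induced by $d_K$ on $K^{[d]}$.

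For the upper bound, fix $n \in \bN$ and $\varepsilon > 0$. Composing any $(\varepsilon, d_\infty)$-embedding of $K^{[d]}$ into a compact metrizable space $P$ with $\Pi$ yields, via the identity above, an $(\varepsilon, \rho_\infty)$-embedding of $\Map(\rho, F, \delta, \sigma)$ into $P$; consequently
\[
\Wdim_\varepsilon(\Map(\rho, F, \delta, \sigma), \rho_\infty) \leq \Wdim_\varepsilon(K^{[d]}, d_\infty) \leq \dim(K^d).
\]
Writing $d = kn + r$ with $0 \leq r < n$ and using subadditivity of the covering dimension under finite products gives $\dim(K^d) \leq k \dim(K^n) + r \dim(K)$. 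Normalizing by $d$, taking the sofic $\limsup$, followed by the infima over $F, \delta, n$ and the supremum over $\varepsilon$, yields $\mdim_\Sigma(K^\Gamma) \leq {\rm stabdim}(K)$.

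For the lower bound, given $y \in K^{[d]}$ define $\varphi_y \in (K^\Gamma)^{[d]}$ by $\varphi_y(v)_s := y(\sigma_{s^{-1}}(v))$, adopting the usual convention $\sigma_{e_\Gamma} = \id$. A direct computation, exploiting that $\rho$ only reads the $e_\Gamma$-coordinate, gives $\rho(s\varphi_y(v), \varphi_y(\sigma_s(v))) = 0$ identically, so $\varphi_y \in \Map(\rho, F, \delta, \sigma)$ for every $F, \delta, y$. Since $y \mapsto \varphi_y$ is a section of $\Pi$ and $\rho_\infty$ descends to $d_\infty$, the embedding is isometric, and therefore
\[
\Wdim_\varepsilon(\Map(\rho, F, \delta, \sigma), \rho_\infty) \geq \Wdim_\varepsilon(K^{[d]}, d_\infty).
\]
Identifying $K^{[d]}$ with $(K^n)^{\lfloor d/n \rfloor} \times K^{d \bmod n}$ under the sup metric and applying \cite[Lemma 3.1]{T19} to the first factor produces $\varepsilon_0 = \varepsilon_0(n) > 0$ with $\Wdim_{\varepsilon_0}(K^{[d]}, d_\infty) \geq \lfloor d/n \rfloor (\dim(K^n) - 1)$ for every sufficiently large $d$. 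Dividing by $d$, taking the sofic $\limsup$, and then supping over $n$, Fekete's lemma applied to the subadditive sequence $m \mapsto \dim(K^m)$ gives $\mdim_\Sigma(K^\Gamma) \geq \lim_n \dim(K^n)/n = {\rm stabdim}(K)$.

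The only non-routine step is justifying Li's reduction to the pseudometric $\rho$ in Definition \ref{top def of mdim}; this reduction is precisely the feature that distinguishes the sofic invariant from its naive counterpart (cf.\ Remark \ref{pseudo example}), and it is what makes the stabilized dimension (rather than merely $\dim(K)$) emerge as the correct value. Once this reduction is granted, the projection $\Pi$ linearizes the computation entirely, and the remainder combines Tsukamoto's basic estimate \cite[Lemma 3.1]{T19} with Fekete's subadditivity lemma.
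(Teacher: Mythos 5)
The paper itself offers no proof of this statement---it is imported verbatim from \cite{JQ21}---so there is nothing internal to compare against; I can only assess your argument on its own terms, and it is correct and follows the route one would expect (and, as far as I can tell, essentially the route of the cited source): coordinate projection $\Pi$ for the upper bound, the exactly equivariant section $y\mapsto\varphi_y$ together with Tsukamoto's estimate \cite[Lemma 3.1]{T19} for the lower bound, and Fekete's lemma to pass from $\sup_n(\dim(K^n)-1)/n$ to $\inf_n\dim(K^n)/n={\rm stabdim}(K)$. The genuinely load-bearing step is the one you flag: replacing a compatible metric by the pseudometric $\rho(x,y)=d_K(x_{e_\Gamma},y_{e_\Gamma})$ in the definition of $\mdims$. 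This is a real theorem of Li (the independence of $\mdims(\rho)$ on the choice of dynamically generating continuous pseudometric, established in Section 2 of \cite{Li13}) and not a formality---Remark \ref{pseudo example} of the present paper shows the analogous reduction fails for the naive invariant---so it should be cited precisely rather than invoked as a ``standard reduction.'' Two cosmetic points: a sofic approximation need not satisfy $\sigma_{e_\Gamma}=\id$, so either normalize $\Sigma$ first or define $\varphi_y(v)_s=y(\sigma_{e_\Gamma}\sigma_{s^{-1}}(v))$ so that $\Pi(\varphi_y)=y$ still holds; and in the upper bound the decomposition $d=kn+r$ is superfluous, since subadditivity of $m\mapsto\dim(K^m)$ already gives $\dim(K^{d_i})/d_i\to{\rm stabdim}(K)$ as $d_i\to\infty$.
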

In particular, based on Proposition \ref{nonamenable example}, we observe that naive mean dimension may be strictly greater than sofic mean dimension. 


The following lemma is summarized from the proof of \cite[Theorem 1.1]{B17}. We remark that the lemma is interesting as $|F|$ is far greater than $1/\tau$.  
\begin{lemma} \label{assigning}
    Let $\Gamma$ be any countable group. Fix $ 0 < \tau < 1$, $0 \leq \eta < 1$ and $F \in \cF(\Gamma)$. Suppose that a map $\sigma  \colon \Gamma \to \Sym(d)$ contains a set $\cB \subseteq [d]$ satisfying $|\cB| \geq (1-\frac{\tau}{2(|F|+1)})d$ and 
    $$\sigma_s(v)\neq \sigma_t(v), \ \sigma_{s^{-1}}(v)=(\sigma_s)^{-1}(v)$$
    for all $v \in \cB$ and $s\neq t \in F\cup F^{-1}$. Then for every set $\cW \subseteq [d]$ with $|\cW|\geq (1-\frac{\eta}{|F|+1})d$, there exist $\ell \in \bN, F_1, \dots, F_\ell \subseteq F$ and $c_1, \dots, c_\ell \in \cW$ such that
    \begin{itemize}
\item[i)] for every $k =1, \dots, \ell$, one has $|F_k|\geq \tau|F|/2 $;\\
\item[ii)] the sets $\{\sigma(F_k)c_k\}_{k=1}^\ell$ are pairwise disjoint with $|\bigsqcup_{k=1}^\ell \sigma(F_k)c_k| \geq (1-\tau-\eta)d$.
\end{itemize}
\end{lemma}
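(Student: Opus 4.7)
The plan is to build $(F_k, c_k)$ greedily and then control the residual uncovered set by a double-counting argument. Set $\cB' := \cW \cap \cB$; the two quantitative hypotheses give $|[d]\setminus\cB'| \leq \frac{(\eta + \tau/2) d}{|F|+1}$. Starting from $\cU_0 := [d]$, at step $k \geq 1$ I pick, whenever possible, some $c \in \cB'$ with $|\sigma(F) c \cap \cU_{k-1}| \geq \tau|F|/2$, and set $c_k := c$, $F_k := \{s \in F : \sigma_s(c_k) \in \cU_{k-1}\}$, and $\cU_k := \cU_{k-1} \setminus \sigma(F_k) c_k$. The injectivity clause on $\cB$ forces $|F_k| = |\sigma(F_k) c_k| \geq \tau|F|/2$, and the blobs $\sigma(F_k) c_k$ are pairwise disjoint by construction, giving (i) and the disjointness half of (ii). Let $\ell$ denote the halting index; by definition, every $c \in \cB'$ satisfies $|\sigma(F) c \cap \cU_\ell| < \tau|F|/2$.

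Summing this inequality over $c \in \cB'$ yields the upper bound $\sum_{c \in \cB'} |\sigma(F) c \cap \cU_\ell| < \frac{\tau|F|}{2} d$. For a matching lower bound I exchange the order of summation. Using $\sigma_s(c) = u \Leftrightarrow c = (\sigma_s)^{-1}(u)$ together with the key identity $(\sigma_s)^{-1}(u) = \sigma_{s^{-1}}(u)$, valid for $u \in \cB$, the inner sum at each $u \in \cU_\ell \cap \cB$ equals $|\sigma(F^{-1}) u \cap \cB'|$, where $|\sigma(F^{-1}) u| = |F|$ thanks to the injectivity condition on $F \cup F^{-1}$. A Fubini-type estimate, using that each $\sigma_t$ is a permutation of $[d]$, bounds $\sum_{u \in \cB} |\sigma(F^{-1}) u \setminus \cW| \leq |F|(d-|\cW|) \leq \frac{|F| \eta d}{|F|+1}$ and analogously with $\cB$ in place of $\cW$, yielding
\[
\sum_{c \in \cB'} |\sigma(F) c \cap \cU_\ell| \;\geq\; |F||\cU_\ell \cap \cB| - \frac{|F|(\eta + \tau/2)d}{|F|+1}.
\]
Comparing the two estimates, dividing by $|F|$, and adding back $|[d]\setminus\cB|$ to recover $|\cU_\ell|$ from $|\cU_\ell \cap \cB|$ gives $|\cU_\ell| \leq \frac{\tau d}{2} + \frac{(\eta + \tau)d}{|F|+1}$; since $|F|+1 \geq 2$, the right-hand side is at most $(\tau + \eta) d$, which establishes (ii).

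The main difficulty is to track the various exceptional sets carefully: the inverse identity $\sigma_{s^{-1}} = (\sigma_s)^{-1}$ is only a pointwise hypothesis on $\cB$, and the backward orbit $\sigma(F^{-1}) u$ may exit either $\cW$ or $\cB$. The factor $|F|+1$ in the hypothesized lower bounds on $|\cB|$ and $|\cW|$ is calibrated precisely so that, after summing over $t \in F^{-1}$, these exceptional contributions are absorbed and leave the leading term $\tfrac{\tau d}{2}$ untouched. A minor bookkeeping point, easily handled by checking that $\cB' \neq \emptyset$ under the standing assumptions, is to verify that the greedy procedure actually executes at least one step so that $\ell \in \bN$.
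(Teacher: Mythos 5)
Your proof is correct and follows essentially the same route as the paper's: a greedy/maximal selection of disjoint blobs $\sigma(F_k)c_k$ with centers in $\cB\cap\cW$, followed by a double-counting argument that exploits the bijection $s\mapsto(\sigma_s)^{-1}(v)=\sigma_{s^{-1}}(v)$ on $\cB$ to show the uncovered set has size at most $(\tau+\eta)d$. The only differences are presentational: the paper runs the counting by contradiction against the maximality inequality $|\sigma(F)c\cap\cV|>(1-\tau/2)|F|$ and absorbs the exceptional points into a single set $\cB'=(\cB\cap\cW)\cap\bigcap_{s\in F}(\sigma_{s^{-1}})^{-1}(\cB\cap\cW)$, whereas you sum the halting inequality directly and track the exceptional contributions additively.
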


\begin{proof}
    We find such sets by induction. Put $F_1=F$ and assign any element of $\cB\cap \cW$ to be $c_1$. Inductively we obtain a maximal collection $\{\sigma(F_k)c_k\}_{k=1}^\ell$ of pairwise disjoint subsets of $[d]$ where each $F_k \subseteq F, c_k \in \cB\cap \cW$ and $|F_k| \geq \tau|F|/2$.   Put $\cV=\bigsqcup_{k=1}^\ell \sigma(F_k)c_k$. We shall show $|\cV| \geq (1-\tau -\eta)d$.

By maximality we conclude that for any $c \in \cB\cap \cW$, one has
\begin{equation} \label{key observation}
    |\sigma(F)c\cap \cV| > (1-\tau/2)|F|.
\end{equation}
Put $\cB'=(\cB\cap \cW) \cap_{s \in F} (\sigma_{s^{-1}})^{-1}(\cB\cap \cW)$. By the assumption on $\cB$ we have $|\cB'| \geq (1-\tau/2-\eta)d$.

{\bf Claim:} $|\cB'\setminus \cV| \leq \tau|\cB\cap \cW|/2$.

Therefore, we conclude that
$$|[d]\setminus \cV| \leq |[d]\setminus \cB'| +|\cB'\setminus \cV| \leq (\tau/2 +\eta) d +\tau/2 d=(\tau +\eta) d.$$

{\it Proof of Claim.}
For every $v \in \cB'$ consider the map $f_v \colon F \to \{c \in \cB\cap \cW: v \in \sigma(F)c\}$ sending $s$ to $(\sigma_s)^{-1}v$. Using the assumption on $\cB$ it is readily checked that $f_v$ is well-defined and bijective. It follows that $|F|=|\{c \in \cB\cap \cW: v \in \sigma(F)c\}|$. 

Assume the claim does not hold. Then
\begin{align*}
    \sum_{c \in \cB\cap \cW} \sum_{v \in \cB'\setminus \cV} 1_{\sigma(F)c}(v)&=\sum_{v \in \cB'\setminus \cV} \sum_{c \in \cB\cap \cW}  1_{\sigma(F)c}(v)\\
    &=\sum_{v \in \cB'\setminus \cV} |\{c \in \cB\cap \cW: v \in \sigma(F)c\}|= \sum_{v \in \cB'\setminus \cV} |F|\\
    &=|\cB'\setminus \cV||F| > \tau|\cB\cap \cW||F|/2.
\end{align*}
It implies that there exists $c_0 \in \cB\cap \cW $ such that $\sum_{v \in \cB'\setminus \cV} 1_{\sigma(F)c_0}(v) > \tau|F|/2$
and hence $|\sigma(F)c_0\setminus \cV| > \tau|F|/2$. This contradicts the inequality (\ref{key observation}).  
\end{proof}

Using the methods in the proofs of \cite[Theorem 1.1]{B17} and \cite[Theorem 3.1]{Li13}, we now prove Theorem \ref{main 1} as an analog of \cite[Theorem 1.1]{B17} for mean dimension.  
\begin{proof}[Proof of Theorem \ref{main 1}]
Fix $1> \beta > 0$ and a compatible metric $\rho$ on $X$. We shall prove $\mdim_\Sigma^\varepsilon(\rho) \leq  4\beta$ for every $\varepsilon > 0$. 

Let $\sU$  be a finite open cover of $X$ such that ${\rm diam}(U, \rho) < \varepsilon$ for each $U \in \sU$. 
Take $\tau > 0$ with  $\tau\Wdim_\varepsilon(X, \rho) \leq \beta$. By the definition of naive mean dimension there exists $F \in \cF(\Gamma)$ such that 
$$\Wdim_{\varepsilon/3}(X, \rho_F) \leq \tau\beta |F|.$$
Applying Lemma \ref{assigning} to $\eta=0$, as $\sigma \colon \Gamma \to \Sym(d)$ is good enough admitting a set $\cB \subseteq [d]$ satisfying $|\cB| \geq (1-\frac{\tau}{2(|F|+1)})$ and 
    $$\sigma_s(v)\neq \sigma_t(v), \ \sigma_{s^{-1}}(v)=(\sigma_s)^{-1}(v)$$
for all $v \in \cB$ and $s\neq t \in F\cup F^{-1}$, there exist $\ell \in \bN$, $F_1, \dots, F_\ell \subseteq F$, $c_1, \dots, c_\ell \in \cB$ such that
    \begin{itemize}
\item[i)] for every $k =1, \dots, \ell$, one has $|F_k|\geq \tau|F|/2 $;\\
\item[ii)] the sets $\{\sigma(F_k)c_k\}_{k=1}^\ell$ are pairwise disjoint with $|\bigsqcup_{k=1}^\ell \sigma(F_k)c_k| \geq (1-\tau)d$.
\end{itemize}

Observe that for any $F' \subseteq F$ with $|F'| \geq \tau|F|/2$, one has
$$\Wdim_{\varepsilon/3}(X, \rho_{F'}) \leq \Wdim_{\varepsilon/3}(X, \rho_F) \leq \tau\beta|F| \leq 2\beta|F'|.$$
In particular, for every $1\leq k \leq \ell$, we have
\begin{equation} \label{Fk esitmate}
    \Wdim_{\varepsilon/3}(X, \rho_{F_k}) \leq 2\beta|F_k|.
\end{equation}
Choose $\kappa > 0$ such that for every $x, x' \in X$ with $\rho(x, x') \leq \kappa$ one has  
  $$\rho_F(x, x') < \varepsilon/3.$$
Take $\delta > 0$ such that $\delta < \kappa^2$ and $\delta |\sU||F| < \beta$. We are left to show that as $\sigma$ is  good enough as in Lemma \ref{assigning}, 
$$\Wdim_\varepsilon(\Map(\rho, F^{-1}, \delta, \sigma), \rho_\infty) \leq  4\beta d.$$

Setting $\cZ=[d] \setminus \bigsqcup_{k=1}^\ell \sigma(F_k)c_k$, it follows that  $|\cZ| \leq \tau d$. For every $\varphi \in \Map(\rho, F^{-1}, \delta, \sigma)$, by Lemma \ref{Map lowerbound}, we have
$$|\{v \in [d]: \rho(s\varphi(v), \varphi(\sigma_s(v))) \leq \sqrt{\delta} {\rm \ for \ every \ } s \in F^{-1}\}| \geq (1-|F|\delta)d.$$
 Take a partition of unit $\{\zeta_U \}_{U \in \sU}$ of $X$ subordinate to $\sU$.
It follows that the map $ \overrightarrow{\zeta} \colon X \to [0, 1]^\sU$ sending $x$ to $(\zeta_U(x))_{U \in \sU}$ is an $(\varepsilon, \rho)$-embedding. 

Now  consider a continuous map $\overrightarrow{h} \colon \Map(\rho, F^{-1}, \delta, \sigma) \to ([0, 1]^\sU)^d$ defined by
$$(\overrightarrow{h}(\varphi))_v=\overrightarrow{\zeta}(\varphi(v))\max_{s \in F^{-1}} \left(\max \left(\rho(s\varphi(v), \varphi(\sigma_s(v)))-\kappa, 0\right)\right)$$
for every $v \in [d]$ and $\varphi \in \Map(\rho, F^{-1}, \delta, \sigma)$. 
Then the image of $\overrightarrow{h}$ sits inside the closed set
$$Z:=\{\omega \colon [d] \to [0, 1]^\sU:  |\{v \in [d]: \omega(v)=\overrightarrow{0}\}| \geq (1-|F|\delta)d   \}.$$
Since $\delta |\sU||F| < \beta$, by \cite[Corollary 1.2.6]{C15}, we have
$\dim (Z) \leq |\sU| |F|\delta d < \beta d$.
For each $k=1, \dots, \ell$,  choose an $(\varepsilon/3, \rho_{F_k})$-embedding $f_k \colon X \to P_k$ and an $(\varepsilon, \rho)$-embedding  $g \colon X \to Q$ satisfying that
\begin{equation}\label{WdimFk}
\dim (P_k)=\Wdim_{\varepsilon/3}(X, \rho_{F_k})  \rm{\ \ and \ \ } \dim (Q)=\Wdim_\varepsilon(X, \rho).
\end{equation}

Next we obtain a continuous map $\Phi \colon \Map(\rho, F^{-1}, \delta, \sigma) \to Z \times (\Pi_{k=1}^\ell P_k)\times Q^\cZ$ by 
$$\Phi(\varphi):=(\overrightarrow{h}(\varphi), (f_k(\varphi(c_k)))_{1 \leq k \leq \ell}, (g(\varphi(v)))_{v \in \cZ}).$$

{\bf Claim:} $\Phi$ is an $(\varepsilon, \rho_{\infty})$-embedding.\\

Therefore, in light of (\ref{Fk esitmate}) and (\ref{WdimFk}),  we have
\begin{align*}
&\Wdim_\varepsilon(\Map(\rho, F^{-1}, \delta, \sigma), \rho_\infty)\\
&\leq  \dim \left(Z \times (\Pi_{k=1}^\ell P_k)\times Q^\cZ
\right)\\
     &\leq \dim(Z) +\sum_{k=1}^\ell \dim(P_k) +|\cZ|\dim(Q) \\
    & \leq \beta d + \sum_{k=1}^\ell \Wdim_{\varepsilon/3}(X, \rho_{F_k}) +\tau d\Wdim_\varepsilon(X, \rho) \\
    & \leq \beta d + \sum_{k=1}^\ell 2\beta|F_k| +\beta d < 4\beta d
\end{align*}
as desired.

{\it Proof of Claim:}
Suppose that $\varphi, \varphi' \in \Map(\rho, F^{-1}, \delta, \sigma)$ satisfy  $\Phi(\varphi)=\Phi(\varphi')$. We need to verify that  for every $v \in [d]$, one has
\begin{equation} \label{verify}
\rho(\varphi(v), \varphi'(v)) < \varepsilon.
\end{equation}
 For $v \in \cZ$, since $g(\varphi(v))=g(\varphi'(v))$,  by the choice of $g$, the inequality (\ref{verify}) follows.
For $v \notin \cZ$, we split the discussion as follows:\\

{\bf Case 1.}  $\overrightarrow{h}(\varphi)_v \neq \overrightarrow{0} \in [0, 1]^\sU$;\\

By the construction of $\overrightarrow{\zeta}$, we have $\overrightarrow{h}(\varphi)_{v, U}=\zeta_U(\varphi(v)) \neq 0$ for some $U \in \sU$. Since ${\rm supp}(\zeta_U) \subseteq U$, we have  $\varphi(v), \varphi'(v) \in U$. Note that ${\rm diam} (U, \rho) < \varepsilon$. Hence the inequality (\ref{verify}) is verified.\\

{\bf Case 2.}  $\overrightarrow{h}(\varphi)_v = \overrightarrow{0}$.\\

In this case, we write $v=\sigma_s(c)$ for the unique $1 \leq k \leq \ell$ and $s \in F_k$.
Since $f_k$ is an $(\varepsilon/3, \rho_{F_k})$-embedding,  we obtain
$$\rho(s\varphi(c_k), s\varphi'(c_k)) < \varepsilon/3.$$
Since  $\{\zeta_U\}_{U \in \sU}$ is a partition of unit, there exists $U \in \sU$ such that $\zeta_U(\varphi(v)) \neq 0$. By the definition of $\overrightarrow{h}$ it forces that
$$\rho(s^{-1}\varphi(v), \varphi(\sigma_{s^{-1}}(v))) \leq \kappa {\rm \ and \ } \rho(s^{-1}\varphi'(v), \varphi'(\sigma_{s^{-1}}(v))) \leq \kappa.$$
Since  $c_k \in \cB$, we have 
 $$\sigma_{s^{-1}}(v)=\sigma_{s^{-1}}(\sigma_s(c_k))=\sigma_{s^{-1}}((\sigma_{s^{-1}})^{-1}(c_k))=c_k.$$
So
$\rho(s^{-1}\varphi(v), \varphi(c_k))  \leq \kappa$. 
By the choice of $\kappa$, we obtain $\rho(\varphi(v), s\varphi(c_k)) < \varepsilon/3$. Simultaneously, $\rho(\varphi'(v), s\varphi'(c_k)) < \varepsilon/3$ also holds.
Therefore,
\begin{align*}
\rho(\varphi(v), \varphi'(v)) &\leq \rho(\varphi(v), s\varphi(c_k)) +\rho(s\varphi(c_k), s\varphi'(c_k)) +\rho(s\varphi'(c_k), \varphi'(v)) \\
                              & < \varepsilon/3 +\varepsilon/3 + \varepsilon/3 =\varepsilon.
\end{align*}

\end{proof}

\begin{corollary}
    As $\Gamma$ is nonamenable, $\mdims(\Gamma \curvearrowright X) \leq \mdim^\nv(\Gamma \curvearrowright X)$.
\end{corollary}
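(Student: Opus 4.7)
The plan is to deduce this corollary as an immediate consequence of Theorem~\ref{main 1} together with Proposition~\ref{range}, by splitting into two cases according to the dichotomy that naive mean dimension enjoys in the nonamenable setting.

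First I would invoke Proposition~\ref{range}, which tells us that for any continuous action of a nonamenable group $\Gamma$ on a compact metrizable space $X$, the value $\mdim^\nv(\Gamma \curvearrowright X)$ lies in $\{0, +\infty\}$. So I only need to verify the desired inequality in these two cases.

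In the case $\mdim^\nv(\Gamma \curvearrowright X) = +\infty$, there is nothing to prove, since $\mdims(\Gamma \curvearrowright X)$ is at most $+\infty$ by definition (and can in fact be $-\infty$ if the space of approximating maps is eventually empty). In the case $\mdim^\nv(\Gamma \curvearrowright X) = 0$, Theorem~\ref{main 1} applies directly and yields $\mdims(\Gamma \curvearrowright X) \leq 0 = \mdim^\nv(\Gamma \curvearrowright X)$, as required. Combining both cases gives the desired inequality uniformly.

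There is essentially no obstacle here; the corollary is a clean packaging of the two prior results. The only subtle point worth mentioning is that the conclusion of Theorem~\ref{main 1} is stated as ``nonpositive'' rather than ``zero,'' so one should take care to note that $\mdims(\Gamma \curvearrowright X) \leq 0$ does indeed imply $\mdims(\Gamma \curvearrowright X) \leq \mdim^\nv(\Gamma \curvearrowright X)$ when the latter equals $0$. Beyond this bookkeeping observation, the argument is a two-line case split with no additional technical ingredients.
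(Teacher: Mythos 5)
Your argument is correct and is essentially identical to the paper's own proof: both invoke Proposition~\ref{range} to reduce to the dichotomy $\mdim^\nv(\Gamma \curvearrowright X) \in \{0,+\infty\}$, apply Theorem~\ref{main 1} in the zero case, and note the infinite case is trivial. No issues.
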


\begin{proof}
    By Proposition \ref{range}, $\mdim^\nv(\Gamma \curvearrowright X)$ take a value of either $0$ or $+\infty$. If $\mdim^\nv(\Gamma \curvearrowright X)=0$, by Theorem \ref{main 1}, we have $\mdims(\Gamma \curvearrowright X) \leq 0 =\mdim^\nv(\Gamma \curvearrowright X)$; if $\mdim^\nv(\Gamma \curvearrowright X)=+\infty$, the conclusion is trivial.
\end{proof}

In connection of Theorem \ref{main 1} and Proposition \ref{subgroup discussion} we conclude the following.
\begin{corollary}
    Let $\Gamma$ be a sofic group with a sofic approximation $\Sigma$. Under the assumption in Proposition \ref{subgroup discussion} we have $\mdims(\Gamma \curvearrowright X) \leq 0$.
\end{corollary}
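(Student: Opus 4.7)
The plan is to deduce this corollary directly by chaining Proposition \ref{subgroup discussion} with Theorem \ref{main 1}. First, I would invoke Proposition \ref{subgroup discussion} under either of its two hypotheses: in case (1), $\mdim^\nv(H \curvearrowright X) < \infty$ forces $\mdim^\nv(\Gamma \curvearrowright X) = 0$, and similarly in case (2), where amenability of $H$ together with $\mdim(H \curvearrowright X) < \infty$ yields the same conclusion. Thus in either scenario we obtain $\mdim^\nv(\Gamma \curvearrowright X) = 0$.

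Next, with vanishing naive mean dimension in hand, I would apply Theorem \ref{main 1} to the sofic group $\Gamma$ equipped with its fixed sofic approximation $\Sigma$. Since the hypothesis of Theorem \ref{main 1} is exactly that $\mdim^\nv(\Gamma \curvearrowright X) = 0$, it delivers the conclusion $\mdim_\Sigma(\Gamma \curvearrowright X) \leq 0$, which is precisely what we need.

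There is no real obstacle here, as the corollary is just a concatenation of the two previously proved results. The only thing worth emphasizing is that Proposition \ref{subgroup discussion} was stated for an arbitrary countable group $\Gamma$, so it applies in particular to any sofic group, and that Theorem \ref{main 1} does not require any quantitative control beyond vanishing of $\mdim^\nv$; both hypotheses of the corollary feed into these two results without any additional assumption. Hence the proof reduces to a two-line invocation, and I would present it in that compact form.
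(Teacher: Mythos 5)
Your proof is correct and matches the paper's intended argument exactly: the paper introduces this corollary with the phrase ``In connection of Theorem \ref{main 1} and Proposition \ref{subgroup discussion}'' and gives no further detail, so the chaining you describe --- Proposition \ref{subgroup discussion} yields $\mdim^\nv(\Gamma \curvearrowright X)=0$ in either case, and Theorem \ref{main 1} then gives $\mdims(\Gamma \curvearrowright X)\leq 0$ --- is precisely the proof. Your remark that Proposition \ref{subgroup discussion} holds for arbitrary countable groups and so applies to sofic $\Gamma$ is a correct and worthwhile sanity check.
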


\section{Naive  mean rank}
\numberwithin{equation}{section}
\setcounter{equation}{0}

In this section we introduce the notion of naive mean rank and discuss its properties. In particular, we obtain satisfactory answers for naive mean dimension of algebraic actions. For any (abelian) group $G$, write $\sF(G)$ for the collection of finitely generated subgroups of $G$. 
\begin{definition} \label{nmrk def}
    Let $\cM$ be a $\ZG$-module. For each $\sA \in \sF(\cM)$ and $F \in \cF(\Gamma)$ write $\sA^F=\sum_{s \in F} s^{-1}\sA$.  Set
    $$\mrk^\nv(\sA):=\inf_{F \in \cF(\Gamma)} \frac{\rk(\sA^F)}{|F|}.$$
    We define the {\it naive mean rank} of $\cM$  as
    $$\mrk^\nv(\cM):=\sup_{\sA \in \sF(\cM)} \mrk^\nv(\sA).$$
\end{definition}

We remark that one can also consider a more general approach started with a length function in the spirit of \cite[Remark 3.16]{LL18}.

As $\Gamma$ is amenable, by \cite[Lemma 3.3]{LL18},  function $F \mapsto \rk(\sA^F)$ is strongly subadditive. By Lemma \ref{OW}, we  get a well-defined notation:
$$ \mrk(\sA):= \lim_F \frac{\rk(\sA^F)}{|F|}=\mrk^\nv(\sA).$$
\begin{definition}\cite[Definition 3.5]{LL18}
    For any $\ZG$-module $\cM$ of an amenable group $\Gamma$, the {\it mean rank} of $\cM$ is defined as 
$$\mrk(\cM):=\sup_{\sA \in \sF(\cM)} \mrk_(\sA).$$
\end{definition}

By the strength of strong subadditivity, it is immediate that 
\begin{proposition} \label{nmrk basic}
    Let $\cM$ be any $\ZG$-module. If $\Gamma$ is amenable, one has $\mrk^\nv(\cM)=\mrk(\cM)$. 
\end{proposition}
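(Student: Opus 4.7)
The plan is to reduce the statement to the per-subgroup identity $\mrk(\sA) = \mrk^\nv(\sA)$ for every $\sA \in \sF(\cM)$, which is essentially already noted in the preceding paragraph, and then take suprema.

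First, I will fix $\sA \in \sF(\cM)$ and verify that the function $\varphi \colon \cF(\Gamma) \to [0, +\infty)$ given by $\varphi(F) = \rk(\sA^F)$ satisfies both hypotheses of the strong form of the Ornstein-Weiss Lemma \ref{OW}. Right invariance $\varphi(Fs) = \varphi(F)$ is immediate: from $\sA^{Fs} = \sum_{t \in F}(ts)^{-1}\sA = s^{-1}\sA^F$ and the fact that $s^{-1}$ acts as a group automorphism on $\cM$, we get $\rk(\sA^{Fs}) = \rk(\sA^F)$. Strong subadditivity
\[
\rk(\sA^{F_1 \cup F_2}) \leq \rk(\sA^{F_1}) + \rk(\sA^{F_2}) - \rk(\sA^{F_1 \cap F_2})
\]
is exactly \cite[Lemma 3.3]{LL18}; it follows from the identities $\sA^{F_1 \cup F_2} = \sA^{F_1} + \sA^{F_2}$, the inclusion $\sA^{F_1 \cap F_2} \subseteq \sA^{F_1} \cap \sA^{F_2}$, and the standard modular-law equality $\rk(A+B) + \rk(A \cap B) = \rk(A) + \rk(B)$ for subgroups of a torsion-free abelian quotient.

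Having verified these hypotheses, Lemma \ref{OW} yields
\[
\mrk(\sA) = \lim_F \frac{\rk(\sA^F)}{|F|} = \inf_{F \in \cF(\Gamma)} \frac{\rk(\sA^F)}{|F|} = \mrk^\nv(\sA).
\]
Taking the supremum over all $\sA \in \sF(\cM)$ on both sides gives $\mrk(\cM) = \mrk^\nv(\cM)$, as required.

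I do not expect a genuine obstacle here, since both the relevant subadditivity property and the strong Ornstein-Weiss lemma are imported as black boxes; the proof amounts to recording the ingredients and passing to the supremum.
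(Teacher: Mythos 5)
Your proposal is correct and follows exactly the route the paper takes: the paper's own justification is the remark preceding the proposition, namely that $F \mapsto \rk(\sA^F)$ is strongly subadditive by \cite[Lemma 3.3]{LL18}, so the strong form of Lemma \ref{OW} gives $\lim_F \rk(\sA^F)/|F| = \inf_F \rk(\sA^F)/|F|$, i.e.\ $\mrk(\sA)=\mrk^\nv(\sA)$, and one takes the supremum over $\sA \in \sF(\cM)$. Your additional verification of right invariance and the sketch of the modular-law argument behind strong subadditivity are accurate and only make explicit what the paper leaves implicit.
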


By a similar argument as in the proof of Proposition \ref{range}, we have
\begin{proposition} \label{nmrk basic 2}
For any  $\ZG$-module $\cM$,   if $\Gamma$ is nonamenable, one has $\mrk^\nv(\cM) \in \{0, +\infty\}$. 
\end{proposition}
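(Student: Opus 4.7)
The plan is to mimic the proof of Proposition~\ref{range} verbatim, with $\rk(\sA^F)$ playing the role of $\Wdim_\varepsilon(X, \rho_F)$ and a finitely generated subgroup $\sA \in \sF(\cM)$ playing the role of the metric $\rho$. The two invariants have identical formal structure---both are suprema over a family (here $\sF(\cM)$, there the scale $\varepsilon$) of the infimum over $\cF(\Gamma)$ of a normalized function on $\cF(\Gamma)$---so the dichotomy argument should transfer without change.

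First, assume $\mrk^\nv(\cM) > 0$; by definition as a supremum over $\sF(\cM)$, I can pick some $\sA \in \sF(\cM)$ with $c := \mrk^\nv(\sA) > 0$. Given an arbitrary target $M > 0$, I invoke the F{\o}lner-type characterization of nonamenability from \cite[Theorem 4.9.2]{CC10} (iterated if needed) to obtain $S \in \cF(\Gamma)$ with $|SF|/|F| \geq M/c$ for every $F \in \cF(\Gamma)$. I then replace $\sA$ with the enlarged finitely generated subgroup $\sA' := \sum_{s \in S} s^{-1}\sA \in \sF(\cM)$, and use the straightforward identity $(\sA')^F = \sA^{SF}$, obtained by reindexing the double sum $\sum_{t \in F}\sum_{s \in S}(st)^{-1}\sA$ along $u = st$. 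Combining this with the uniform lower bound $\rk(\sA^G) \geq c|G|$ that comes from $\mrk^\nv(\sA) = c$ being defined as an infimum, I conclude
$$\frac{\rk((\sA')^F)}{|F|} \;=\; \frac{|SF|}{|F|}\cdot\frac{\rk(\sA^{SF})}{|SF|} \;\geq\; \frac{M}{c}\cdot c \;=\; M$$
for every $F \in \cF(\Gamma)$, which yields $\mrk^\nv(\sA') \geq M$ and hence $\mrk^\nv(\cM) \geq M$. Letting $M \to \infty$ finishes the proof.

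There is no real obstacle: the argument is a direct transcription of Proposition~\ref{range}. The only small verification worth noting is the identity $(\sA')^F = \sA^{SF}$, where possible multiplicities in the reindexed double sum cause no trouble since the rank depends only on the subgroup generated. This step is actually slightly cleaner than its metric counterpart, because here the new object $\sA'$ lies in the same family $\sF(\cM)$ that $\mrk^\nv(\cM)$ supremizes over, whereas Proposition~\ref{range} must invoke metric-independence of $\mdim^\nv$ to pass from $\rho_S$ back to the defining supremum.
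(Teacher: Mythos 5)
Your proof is correct and is precisely the argument the paper intends: Proposition~\ref{nmrk basic 2} is stated in the paper with only the remark that it follows ``by a similar argument as in the proof of Proposition~\ref{range}'', and your transcription---replacing $\rho_S$ by $\sA^S$ and using the identity $(\sA^S)^F=\sA^{SF}$ together with the uniform bound $\rk(\sA^G)\geq c|G|$---is exactly that argument. Your closing observation that $\sA'=\sA^S$ already lies in $\sF(\cM)$, so no analogue of metric-independence is needed, is a fair and accurate remark.
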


From the proof of \cite[Theorem 4.1]{LL18}, using a generalized definition of naive mean dimension in Remark \ref{Hausdorff case}, we obtain
\begin{theorem} \label{bridge equality}
    For any group $\Gamma$ and any $\ZG$-module $\cM$, we have $\mdim^\nv(\hcM)=\mrk^\nv(\cM)$.
\end{theorem}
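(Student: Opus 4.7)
The plan is to adapt the proof of \cite[Theorem 4.1]{LL18} to the naive setting, replacing limits over $\cF(\Gamma)$ with infima. The central tool is the family of continuous, translation-invariant pseudometrics on $\hcM$ attached to finitely generated subgroups of $\cM$: for $\sA \in \sF(\cM)$ with chosen generators $a_1, \dots, a_n$, set
$$\rho_\sA(\chi, \chi') := \max_{1 \leq i \leq n} \|\chi(a_i) - \chi'(a_i)\|_{\bR/\bZ}.$$
A direct unwinding yields $(\rho_\sA)_F = \rho_{\sA^F}$ with respect to the generating set $\{s^{-1}a_i\}_{s\in F,\, 1\leq i\leq n}$ of $\sA^F$, and this pseudometric factors through the restriction map $\pi_{\sA^F}\colon \hcM \to \widehat{\sA^F}$, where it descends to a translation-invariant compatible metric on the compact abelian Lie group $\widehat{\sA^F}$ of dimension $\rk(\sA^F)$.

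The technical heart is the following dimension computation, extracted from the proof of \cite[Theorem 4.1]{LL18}: for each $\sA \in \sF(\cM)$, there exists $\varepsilon_\sA > 0$ such that
$$\Wdim_\varepsilon(\hcM, (\rho_\sA)_F) = \rk(\sA^F) \quad \text{for every } F \in \cF(\Gamma) \text{ and every } 0 < \varepsilon < \varepsilon_\sA.$$
The upper bound is immediate: $\pi_{\sA^F}$ is a $(0, (\rho_\sA)_F)$-embedding of $\hcM$ into the compact space $\widehat{\sA^F}$, and $\dim\widehat{\sA^F} = \rk(\sA^F)$. Granting this lemma, both inequalities follow via routine metric comparisons. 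For $\mdim^\nv(\hcM) \leq \mrk^\nv(\cM)$: fix a compatible metric $\rho$ on $\hcM$ and $\varepsilon > 0$; since the compact-open topology on $\hcM$ is generated by the family $\{\rho_\sA\}_{\sA \in \sF(\cM)}$, there exist $\sA \in \sF(\cM)$ and $\delta > 0$ such that $\rho_\sA < \delta$ implies $\rho < \varepsilon$. A routine check propagates this to $(\rho_\sA)_F < \delta \Rightarrow \rho_F < \varepsilon$ uniformly in $F$, so that
$$\Wdim_\varepsilon(\hcM, \rho_F) \leq \Wdim_\delta(\hcM, (\rho_\sA)_F) \leq \rk(\sA^F);$$
dividing by $|F|$, taking infimum over $F$, and then supremum over $\varepsilon$ yields the bound. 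Conversely, for $\mrk^\nv(\cM) \leq \mdim^\nv(\hcM)$: fix $\sA \in \sF(\cM)$; continuity of $\rho_\sA$ on the compact space $\hcM$ produces, for any $\eta < \varepsilon_\sA$, some $\delta > 0$ with $\rho_F < \delta \Rightarrow (\rho_\sA)_F < \eta$ uniformly in $F$, whence
$$\rk(\sA^F) = \Wdim_\eta(\hcM, (\rho_\sA)_F) \leq \Wdim_\delta(\hcM, \rho_F);$$
dividing by $|F|$, taking infimum over $F$, and then supremum over $\sA$ gives the desired inequality.

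The main obstacle is securing the uniform-in-$F$ lower bound $\Wdim_\varepsilon(\hcM, (\rho_\sA)_F) \geq \rk(\sA^F)$ in the dimension computation. Since $\rk(\sA^F)$ can grow without bound with $|F|$, a priori the threshold below which $\Wdim$ attains the topological dimension could shrink as $F$ grows. The crucial geometric input borrowed from \cite{LL18} is that, because every generator $s^{-1}a_i$ of $\sA^F$ is a translate of one of the fixed generators $a_1,\dots,a_n$ of $\sA$, the translation-invariant metric descended to the identity component $\bT^{\rk(\sA^F)} \leq \widehat{\sA^F}$ retains a uniformly controlled injectivity scale; this permits $\varepsilon_\sA$ to be chosen once and for all, independent of $F$, which is exactly what is needed to close the argument in the naive setting.
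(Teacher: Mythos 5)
Your overall architecture is the right one and matches what the paper intends by its one-line appeal to the proof of \cite[Theorem 4.1]{LL18}: work with the pseudometrics $\rho_\sA$, identify $(\rho_\sA)_F$ with $\rho_{\sA^F}$, get the upper bound from the $(0,(\rho_\sA)_F)$-embedding into $\widehat{\sA^F}$, get the lower bound from a $\Wdim$ computation with an $\varepsilon$-threshold uniform in $F$, and observe that since both estimates are pointwise in $F$ the passage from $\lim_F$ to $\inf_F$ is formal. The two metric-comparison reductions are also fine.

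There is, however, one step where your justification does not support what you assert. You state the key lemma as a lower bound on $\hcM$ itself, namely $\Wdim_\varepsilon(\hcM,(\rho_\sA)_F)\ge\rk(\sA^F)$, but the geometric explanation you give in the last paragraph lives entirely on $\widehat{\sA^F}$ (the injectivity scale of the translation-invariant metric on the torus $\bT^{\rk(\sA^F)}\le\widehat{\sA^F}$). These are not the same statement, because $(\rho_\sA)_F$ is the pullback of a metric on $\widehat{\sA^F}$ along the restriction map $\pi\colon\hcM\to\widehat{\sA^F}$, and $\Wdim_\varepsilon$ is \emph{not} monotone under pulling back pseudometrics along continuous surjections: if $\pi_0\colon[0,1]\to[0,1]^2$ is a space-filling curve and $d$ is the Euclidean metric, then $\Wdim_\varepsilon([0,1],\pi_0^*d)\le 1<2=\Wdim_\varepsilon([0,1]^2,d)$ for small $\varepsilon$, since the identity into $[0,1]$ is injective. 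So ``descend to $\widehat{\sA^F}$, compute there, and pull back'' does not close the argument. What saves the day in \cite{LL18} is that $\pi$ is a surjective homomorphism of compact abelian groups, so the topological obstruction witnessing $\Wdim_\varepsilon(\widehat{\sA^F},\cdot)\ge\rk(\sA^F)$ (in cohomological terms, the class pulled back from the torus) survives under $\pi^*$ and the estimate can be run on $\hcM$ directly; this is an additional input beyond the uniform injectivity scale you single out, and it is the reason the uniform-in-$F$ threshold $\varepsilon_\sA$ exists at the level of $\hcM$. You should either state the extracted lemma in the form actually proved in \cite{LL18} (on $\hcM$, for the pullback pseudometric) or supply this transfer step explicitly. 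A second, minor point: the theorem is stated for arbitrary $\ZG$-modules, so $\hcM$ need not be metrizable and ``fix a compatible metric $\rho$ on $\hcM$'' only treats countable $\cM$; this is exactly why the paper invokes the open-cover formulation of Remark \ref{Hausdorff case}, and your argument should either be phrased in those terms or reduce to countable (equivalently, finitely generated) submodules first.
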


\begin{corollary}\label{amenable match}  
Let $\Gamma \curvearrowright X$ be an algebraic action of an amenable group. Then 
$\mdim^\nv(X)=\mdim(X)$
\end{corollary}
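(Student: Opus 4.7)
The plan is to combine the duality bridge established in Theorem \ref{bridge equality} with the amenable-case identification of the naive and classical mean ranks, and then invoke the Li--Liang equality between mean rank and mean dimension for algebraic actions of amenable groups \cite[Theorem 1.1]{LL18}.

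More concretely, first I would invoke Pontryagin duality to write $X = \hcM$ where $\cM := \widehat{X}$ is a countable $\ZG$-module (the algebraic $\Gamma$-action on $\cM$ being the one dual to $\Gamma \curvearrowright X$, as recalled in Section~2.2). This puts us in the setting where the invariants on the topological side can be translated into invariants of the module $\cM$.

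Next I would apply Theorem \ref{bridge equality} to obtain
\[
\mdim^\nv(\Gamma \curvearrowright X) \;=\; \mdim^\nv(\hcM) \;=\; \mrk^\nv(\cM).
\]
Since $\Gamma$ is amenable, Proposition \ref{nmrk basic} gives $\mrk^\nv(\cM) = \mrk(\cM)$. On the other hand, the main theorem of \cite{LL18} (stating that for any $\ZG$-module $\cM$ over an amenable group $\Gamma$ one has $\mdim(\hcM) = \mrk(\cM)$) yields $\mrk(\cM) = \mdim(\Gamma \curvearrowright X)$. Chaining these equalities gives $\mdim^\nv(X) = \mdim(X)$, as required.

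There is no serious obstacle here: the result is essentially a bookkeeping combination of Theorem \ref{bridge equality}, Proposition \ref{nmrk basic}, and the external input from \cite{LL18}. The only point worth double-checking is that the bridge $\mdim^\nv(\hcM) = \mrk^\nv(\cM)$ is stated for \emph{arbitrary} countable groups (which Theorem \ref{bridge equality} confirms), so that the amenability hypothesis is only used once, to pass from $\mrk^\nv$ to $\mrk$ and to apply the Li--Liang identification. Note that the trivial inequality $\mdim^\nv(X) \leq \mdim(X)$ for amenable $\Gamma$ follows immediately from the definitions (infimum over $\cF(\Gamma)$ versus limit along left-asymptotically invariant sets), so the genuine content of the corollary lies in the reverse inequality, which is exactly what the algebraic route supplies.
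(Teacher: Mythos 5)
Your argument is exactly the paper's proof: chain Theorem \ref{bridge equality}, Proposition \ref{nmrk basic}, and the Li--Liang identity $\mdim(\hcM)=\mrk(\cM)$ for amenable groups (cited in the paper as \cite[Theorem 4.1]{LL18}) to get $\mdim^\nv(X)=\mrk^\nv(\widehat{X})=\mrk(\widehat{X})=\mdim(X)$. The proposal is correct and matches the paper's route step for step.
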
 

\begin{proof}
  By \cite[Theorem 4.1]{LL18}, Proposition \ref{nmrk basic}, and Theorem \ref{bridge equality}, we infer that
    $$\mdim(X)=\mrk(\widehat{X})=\mrk^\nv(\widehat{X})=\mdim^\nv(X).$$
\end{proof}

Now we compare naive mean rank and sofic mean rank. Let us first recall the definition of sofic mean rank. 
Let $\cM$ be any $\ZG$-module and $\sA, \sB \in \sF(\cM)$. For each $d \in \bN$ we may consider $\cM^d \cong \bZ^d \otimes_\bZ \cM$ as abelian groups and write the element of $\cM^d$ as
$$\sum_{v \in [d]}\delta_v\otimes a_v$$
for some $v \in [d]$ and $ a_v \in \cM$, where $\delta_v$ denotes the element of $\bZ^d$ taking value 1 at the coordinate $v$ and $0$ everywhere else.  For any $F \in \cF(\Gamma)$ and any map $\sigma: \Gamma \to \Sym(d)$, denote by $\sM(\sB, F, \sigma)$ the abelian subgroup of $\cM^d$ generated by the elements 
$$\delta_v\otimes b-\delta_{\sigma(s)v}\otimes sb$$
for all $v \in [d], b \in \sB$, and $s \in F$.  Put $\sM(\sA, \sB, F, \sigma)$ as the image of $\sA^d$ in $\cM^d/\sM(\sB, F, \sigma)$ under the natural quotient map $\cM^d \to \cM^d/\sM(\sB, F, \sigma)$. 

Fix $\Sigma=\{\sigma_i \colon \Gamma \to \Sym(d_i)\}_{i=1}^\infty$ as a sofic approximation of a sofic group $\Gamma$.
\begin{definition}\cite[Definition 3.1]{LL19} \label{sofic algebraic entropy}
Let $\cM_1 \subseteq \cM_2$ be two $\ZG$-modules. For any $\sA \in \sF(\cM_1), \sB \in \sF(\cM_2)$ and $F \in \cF(\Gamma)$, set
$$\mrk(\sA|\sB, F):=\varlimsup_{i \to +\infty} \frac{\rk(\sM(\sA, \sB, F, \sigma_i))}{d_i},$$
and 
$$\mrk(\sA|\cM_2):= \inf_{F \in \cF(\Gamma)} \inf_{\sB \in \sF(\cM_2)}\mrk(\sA|\sB, F). $$
The {\it sofic mean rank of $\cM_1$ relative to $\cM_2$}  is defined as
$$\mrks(\cM_1|\cM_2):=\sup_{\sA \in \sF(\cM_1)} \mrk(\sA|\cM_2).$$
The {\it sofic mean rank} of $\cM_1$ is then defined as 
$$\mrks(\cM_1):=\mrks(\cM_1|\cM_1).$$
\end{definition}

In light of \cite[Proposition 8.5]{LL19} and \cite[Theorem 10.1]{LL19}, we have
\begin{theorem} \label{sofic coincide}
    For any algebraic action $\Gamma \curvearrowright X$ of a sofic group, one has $\mdims(X)=\mrks(\widehat{X})$.
\end{theorem}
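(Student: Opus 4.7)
The plan is to deduce the equality by invoking the two cited results from Li--Liang, with Pontryagin duality as the bridge: the algebraic action $\Gamma \curvearrowright X$ corresponds bijectively to the $\ZG$-module structure on the countable discrete abelian group $\widehat{X}$, and every algebraic invariant of $\widehat{X}$ should have a topological counterpart for $X$. Throughout I would work with a preferred family of compatible (pseudo)metrics on $X$ adapted to this duality, namely $\rho_\sB(x,y) := \max_{\chi \in \sB} \|\chi(x)-\chi(y)\|_{\bR/\bZ}$ for $\sB \in \sF(\widehat{X})$; since $\widehat{X}$ separates points and $X$ is compact, these approximate any compatible metric at any prescribed scale.

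First I would prove $\mdim_\Sigma(X) \leq \mrk_\Sigma(\widehat{X})$. Fix a compatible metric $\rho$ on $X$ and $\varepsilon > 0$. Choose $\sB \in \sF(\widehat{X})$ large enough that $\rho$ is dominated by $\rho_\sB$ at scale $\varepsilon$, so $\Wdim_\varepsilon(\Map(\rho,F,\delta,\sigma_i),\rho_\infty)$ is controlled by a corresponding width dimension computed against $\rho_\sB$. Elements of $\Map(\rho_\sB,F,\delta,\sigma_i)$ are essentially approximately-equivariant maps $[d_i] \to \widehat{\sB}$, which by Pontryagin duality correspond to quotients of the relative module $\sM(\sA,\sB,F,\sigma_i)$ for any $\sA \in \sF(\widehat{X})$ containing $\sB$. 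Invoking [LL19, Proposition 8.5] then converts the width-dimension upper bound into the rank-quotient expression $\rk(\sM(\sA,\sB,F,\sigma_i))/d_i$, whose infimum over $F, \sB$ and supremum over $\sA$ is exactly $\mrk_\Sigma(\widehat{X})$.

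For the reverse inequality $\mrk_\Sigma(\widehat{X}) \leq \mdim_\Sigma(X)$, I would apply [LL19, Theorem 10.1]. Given $\sA \in \sF(\widehat{X})$, each nonzero element of the torsion-free part of the quotient image in $\cM^d/\sM(\sB,F,\sigma_i)$ dualizes to a continuous one-parameter family of approximately-equivariant maps $[d_i] \to X$, and collecting these for a basis of the torsion-free part produces a copy of a torus of dimension $\rk(\sM(\sA,\sB,F,\sigma_i))$ sitting inside $\Map(\rho,F,\delta,\sigma_i)$ (for $\rho = \rho_\sA$ and appropriate $\delta$). Since a $d$-dimensional torus has $\Wdim_\varepsilon \geq d$ at small $\varepsilon$, this produces the desired lower bound on $\mdim_\Sigma^\varepsilon(\rho_\sA)$, hence on $\mdim_\Sigma(X)$.

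The principal technical obstacle is the faithful transfer of torsion-free rank estimates on the finitely generated abelian groups $\sM(\sA,\sB,F,\sigma_i)$ into width-dimensional estimates on the compact dual, in a way that remains uniform in the sofic parameters and respects the various infima and suprema in the definitions of $\mdim_\Sigma$ and $\mrk_\Sigma$. The classical identity $\dim(\widehat{A}) = \rk(A)$ for finitely generated abelian groups $A$ is the local input; globalizing it over the sofic approximation, and controlling the torsion part so that it does not contribute spurious dimension on either side, is exactly what [LL19, Proposition 8.5] and [LL19, Theorem 10.1] are engineered to do, which is why the theorem drops out from their combination.
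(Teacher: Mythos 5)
Your proposal is correct and follows essentially the same route as the paper: the paper states this theorem with no argument beyond the citation ``in light of [LL19, Proposition 8.5] and [LL19, Theorem 10.1],'' which is precisely the two-inequality skeleton you describe (the duality-based upper bound from Proposition 8.5 and the torus/rank lower bound from Theorem 10.1). Your additional elaboration of what those two results do internally is a reasonable reconstruction, but the paper itself treats the statement as an immediate consequence of the cited results rather than reproving them.
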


Now we give a self-contained proof of Theorem \ref{nmrk compare}. With the help of Theorems \ref{main 1} and \ref{sofic metric coincide}, one can also give a quick proof.
\begin{proof}[Proof of Theorem \ref{nmrk compare}]
     If $\Gamma$ is amenable, by \cite[Theorem 5.1]{LL19} and Proposition \ref{nmrk basic}, we have
  $$\mrks(\cM)=\mrk(\cM)=\mrk^\nv(\cM).$$
  To deal with the nonamenable case, based on Proposition \ref{nmrk basic 2}, it suffices to show that $\mrk^\nv(\cM)=0$ forces that $\mrks(\cM)=0$. Fix $\sA \in \cF(\cM)$  and any $\beta > 0$. It is reduced to show
$$\mrks(\sA|\cM) \leq 3\beta.$$

Take $\tau > 0$ such that $\rk(\sA) \leq \beta/\tau$. Since $\mrk^\nv(\sA)=0$, there exists  $F \in \cF(\Gamma)$ such that 
\begin{equation} \label{sA^F rank esitmate}
 \rk(\sA^F) \leq \tau\beta|F|.
\end{equation}
Applying Lemma \ref{assigning} to $\eta=0$, as $\sigma \colon \Gamma \to \Sym(d)$ is good enough admitting a set $\cB \subseteq [d]$ satisfying $|\cB| \geq (1-\frac{\tau}{2(|F|+1)})$ and 
    $$\sigma_s(v)\neq \sigma_t(v), \ \sigma_{s^{-1}}(v)=(\sigma_s)^{-1}(v)$$
for all $v \in \cB$ and $s\neq t \in F\cup F^{-1}$, there exist $\ell \in \bN$, $F_1, \dots, F_\ell \subseteq F$, $c_1, \dots, c_\ell \in \cB$ such that
    \begin{itemize}
\item[i)] for every $k =1, \dots, \ell$, one has $|F_k|\geq \tau|F|/2 $;\\
\item[ii)] the sets $\{\sigma(F_k)c_k\}_{k=1}^\ell$ are pairwise disjoint with $|\bigsqcup_{k=1}^\ell \sigma(F_k)c_k| \geq (1-\tau)d$.
\end{itemize}

Once again for any $F' \subseteq F$ with $|F'| \geq \tau|F|/2$, by (\ref{sA^F rank esitmate}), one has
$$\rk(\sA^{F'}) \leq \rk(\sA^F) \leq \tau\beta|F| \leq 2\beta|F'|.$$
In particular, for every $1\leq k \leq \ell$, we have
\begin{equation} \label{Fk rank esitmate}
    \rk(\sA^{F_k}) \leq 2\beta|F_k|.
\end{equation}

Put $\sB=\sA^F$, $\sA_0=\sum_{v \in [d]\setminus \sqcup_{k=1}^\ell \sigma(F_k)c_k} \delta_v\otimes \sA$, and $\sA_k=\delta_{c_k}\otimes \sA^{F_k}$ for every $k=1, \dots, \ell$. Consider the natural quotient map $\Phi \colon \cM^d \to \cM^d/\sM(\sB, F, \sigma)$. Note that for any $s \in F_k, a \in \sA$ one has
$$\Phi(\delta_{\sigma(s)c_k}\otimes a)=\Phi(\delta_{c_k}\otimes s^{-1}a) \in \Phi(\sA_k).$$
It concludes that 
$\sM(\sA, \sB, F, \sigma) \subseteq \Phi(\sA_0) +\sum_{k=1}^\ell \Phi(\sA_k)$.
Using assumption on $\rk(\sA)$ and (\ref{Fk rank esitmate}) we obtain
\begin{align*}
    \rk(\sM(\sA, \sB, F, \sigma)) & \leq \rk(\sA_0) + \sum_{k=1}^\ell \rk(\sA_k) \\
    & \leq \rk(\sA)\tau d +2\beta d \\
    & \leq \beta d +2\beta d =3\beta d.
\end{align*}
Thus $\mrks(\sA|\cM) \leq \mrks(\sA|\sB, F) \leq 3\beta$ as desired.  
\end{proof}


\begin{proof}[Proof of Corollary \ref{main 2}]
    Combining Theorems \ref{sofic coincide}, \ref{nmrk compare}, with \ref{bridge equality}, we infer that 
    $$\mdims(X)=\mrk(\widehat{X})\leq \mrk^\nv(\widehat{X})=\mdim^\nv(X).$$
\end{proof}


\section{Naive  metric mean dimension}
\numberwithin{equation}{section}
\setcounter{equation}{0}
In this section, we introduce the notion of naive metric mean dimension and mainly  discuss its relation with naive metric mean dimension.

Consider a continuous pseudometric $\rho$ on a compact metrizable space $X$ and any $\varepsilon >0$. Recall that a subset $Z$ of $X$ is called {\it $(\varepsilon, \rho)$-separated} if $\rho(z_1, z_2) \geq \varepsilon$ for all distinct $z_1, z_2 \in Z$. Set $N_\varepsilon(X, \rho):=\max_Z|Z|$ for $Z$ ranging over all $(\varepsilon, \rho)$-separated subsets of $X$.

\begin{definition}
Let $\Gamma \curvearrowright X$ be a continuous action of any countable group $\Gamma$. Set
$$h_\varepsilon^\nv(\rho)=\inf_{F \in \cF(\Gamma)} \frac{\log N_\varepsilon(X, \rho_F)}{|F|}.$$
When $\rho$ is compatible, we define the {\it naive metric mean dimension of $\Gamma \curvearrowright X$ with respect to $\rho$} as 
$$\mdim_\rM^\nv(\Gamma \curvearrowright X, \rho):=\varliminf_{\varepsilon \to 0} \frac{1}{\log(1/\varepsilon)} h_\varepsilon^\nv(\rho).$$
We also define the {\it naive metric mean dimension of $\Gamma \curvearrowright X$} as
$$\mdim_\rM^\nv(\Gamma \curvearrowright X):=\inf_\rho \mdim_\rM^\nv(\Gamma \curvearrowright X, \rho)$$
for $\rho$ ranging over compatible metrics on $X$.
Meanwhile, the {\it naive topological entropy of $\Gamma \curvearrowright X$ } is defined as 
$$h^\nv(\Gamma \curvearrowright X ):=\sup_{\varepsilon > 0} h_\varepsilon^\nv(\rho),$$
which is independent of the choice of compatible metric $\rho$ (see \cite[Definition 2.4]{B17}).
\end{definition}

 If the space $X$ and the acting group $\Gamma$ is understood in the context, we may write $\mdim_\rM^\nv(X, \rho)$ and $h^\nv(X)$ for $\mdim_\rM^\nv(\Gamma \curvearrowright X, \rho)$ and $h^\nv(\Gamma \curvearrowright X)$ respectively in short.

Now we recall the definition of sofic metric mean dimension. Fix a sofic approximation $\Sigma=\{\sigma_i \colon \Gamma \to \Sym(d_i)\}_{i=1}^\infty$  of a sofic group $\Gamma$.
\begin{definition}\cite[Definition 4.1]{Li13}
 For a  continuous action $\Gamma \curvearrowright X$ of a sofic group, define
$$h_\Sigma^\varepsilon(\rho):=\inf_{F \in \cF(\Gamma)} \inf_{\delta > 0} \varlimsup_{i\to \infty}\frac{1}{d_i}\log N_\varepsilon(\Map(\rho, F, \delta, \sigma_i),  \rho_{\infty}). $$
When $\varepsilon, F, \delta$ are fixed, denote the above limit supremum as $h_\Sigma^\varepsilon(\rho, F, \delta)$. If  $\Map(\rho, F, \delta, \sigma_i)$ is empty for all sufficiently large $i$, the limit supremum is set to be $-\infty$.

When $\rho$ is compatible, the {\it sofic metric mean dimension of $\Gamma \curvearrowright X$ with respect to $\rho$} is defined as
$$\mdimsm(\Gamma \curvearrowright X, \rho):= \varliminf_{\varepsilon \to 0} \frac{h_\Sigma^\varepsilon(\rho)}{|\log \varepsilon|}.$$
The {\it sofic metric mean dimension of $\Gamma \curvearrowright X$} is defined as
$$\mdimsm(\Gamma \curvearrowright X, \rho):= \inf_{\rho} \mdimsm(\Gamma \curvearrowright X, \rho),$$
for $\rho$ ranging over compatible metrics on $X$. 
Meanwhile, the {\it sofic topological entropy of $\Gamma \curvearrowright X$ } is defined as 
$$h_\Sigma(\Gamma \curvearrowright X ):=\sup_{\varepsilon > 0} h_\Sigma^\varepsilon(\rho)$$
for some compatible metric $\rho$ (see \cite[Definition 2.2]{KL13}).

\end{definition}

Modifying argument of \cite[Theorem 1.1]{B17} and \cite[Lemma 5.3]{Li13}, we prove
\begin{theorem} \label{decaying case}
Let $\Gamma \curvearrowright X$ be any continuous action of a sofic group $\Gamma$ and $\rho$ a compatible metric on $X$. Suppose that  $\mdim_\rM^\nv(\Gamma \curvearrowright X, \rho) =0$ and furthermore 
$$\varliminf_{\varepsilon \to 0} \frac{h^\nv_{\varepsilon}(\rho)}{|\log \varepsilon|} \frac{\log N_{\varepsilon}(X, \rho)}{|\log\varepsilon|}=0. $$  (decaying speed of $\varepsilon$-entropy over $|\log\varepsilon|$ is far greater than the expanding speed of $\varepsilon$-box dimension). Then $\mdimsm(\Gamma \curvearrowright X, \rho)\leq 0$.
\end{theorem}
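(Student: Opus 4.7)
The plan is to adapt the proof of Theorem \ref{main 1}, replacing its dimension-theoretic embedding by a combinatorial encoding of maps in $\Map(\rho,F^{-1},\delta,\sigma)$, and then to balance the two parameters that appear via AM--GM so that the product hypothesis enters cleanly at the end.

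First I would fix $\varepsilon>0$ together with auxiliary parameters $\beta,\tau\in(0,1)$ to be optimized at the very end. Since $\mdim_\rM^\nv(\Gamma\curvearrowright X,\rho)=0$ ensures $h^\nv_{\varepsilon/3}(\rho)<\infty$, the infimum defining $h^\nv_{\varepsilon/3}(\rho)$ yields some $F\in\cF(\Gamma)$ with $\log N_{\varepsilon/3}(X,\rho_F)\le \tau\beta|F|$. Next I would use uniform continuity to pick $\kappa>0$ so that $\rho(x,x')\le\kappa$ forces $\rho(sx,sx')<\varepsilon/6$ for every $s\in F\cup F^{-1}$, and then $\delta>0$ with $\sqrt{\delta}\le\kappa$ and $|F|\delta$ as small as desired. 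For $\sigma\colon\Gamma\to\Sym(d)$ good enough, Lemma \ref{assigning} (applied with $\eta=0$) supplies $F_1,\dots,F_\ell\subseteq F$ with $|F_k|\ge\tau|F|/2$ and $c_1,\dots,c_\ell\in\cB$, with residual $\cZ:=[d]\setminus\bigsqcup_k\sigma(F_k)c_k$ of size $\le\tau d$. Exactly the monotonicity step used in Theorem \ref{main 1} then gives $\log N_{\varepsilon/3}(X,\rho_{F_k})\le 2\beta|F_k|$ for every $k$.

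To each $\varphi\in\Map(\rho,F^{-1},\delta,\sigma)$ I would attach the triple consisting of: (i) the bad set $B(\varphi):=\{v\in[d]:\rho(s\varphi(v),\varphi(\sigma_s v))>\sqrt{\delta}\text{ for some }s\in F^{-1}\}$, which by Lemma \ref{Map lowerbound} satisfies $|B(\varphi)|\le|F|\delta d$; (ii) for each $v\in B(\varphi)\cup\cZ$, a closest point to $\varphi(v)$ in a fixed minimal $(\varepsilon/6,\rho)$-net of $X$; and (iii) for each $k$, a closest point to $\varphi(c_k)$ in a fixed minimal $(\varepsilon/6,\rho_{F_k})$-net of $X$. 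Repeating the verification from the proof of Theorem \ref{main 1} (using $\sigma_{s^{-1}}(\sigma_s c_k)=c_k$ for $c_k\in\cB$ and the choice of $\kappa$), two elements of an $(\varepsilon,\rho_\infty)$-separated set sharing the same triple would have $\rho_\infty$-distance strictly less than $\varepsilon$, contradicting the separation. Counting triples---using $\log\binom{d}{k}\le k\log(ed/k)$ for the choice of bad set---then dividing by $d$ and letting $\delta\to 0$ would yield
\[
h_\Sigma^\varepsilon(\rho)\ \le\ 2\beta\ +\ \tau\log N_{\varepsilon/3}(X,\rho).
\]

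At this stage the constraint $\tau\beta\ge h^\nv_{\varepsilon/3}(\rho)$ (built into the choice of $F$) would let me minimize $2\beta+\tau L$ in $\beta,\tau$, where $L:=\log N_{\varepsilon/3}(X,\rho)$ and $h:=h^\nv_{\varepsilon/3}(\rho)$, to obtain $h_\Sigma^\varepsilon(\rho)\le 2\sqrt{2hL}$ by AM--GM. Dividing by $|\log\varepsilon|$, noting $|\log(\varepsilon/3)|\sim|\log\varepsilon|$ and applying the bound at scale $3\varepsilon$ in place of $\varepsilon$, the hypothesis
\[
\varliminf_{\varepsilon\to 0}\frac{h^\nv_\varepsilon(\rho)}{|\log\varepsilon|}\cdot\frac{\log N_\varepsilon(X,\rho)}{|\log\varepsilon|}\ =\ 0
\]
would force $\varliminf_{\varepsilon\to 0}h_\Sigma^\varepsilon(\rho)/|\log\varepsilon|\le 0$, i.e.\ $\mdimsm(\Gamma\curvearrowright X,\rho)\le 0$, as claimed. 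I expect the principal obstacle to be the combinatorial handling of the bad set: Theorem \ref{main 1} absorbs it in a continuous partition-of-unity map $\overrightarrow{h}$ whose image lies in a set of small topological dimension, whereas in the separated-set framework I must encode $B(\varphi)$ explicitly and then verify that the resulting $\binom{d}{|F|\delta d}$ term is tamed by the $\inf_\delta$ appearing in the definition of $h_\Sigma^\varepsilon(\rho)$.
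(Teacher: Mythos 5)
Your proposal is correct and follows the same skeleton as the paper's proof of Theorem \ref{decaying case} --- both rest on Lemma \ref{assigning}, the decomposition of $[d]$ into the tiles $\sigma(F_k)c_k$ plus a residual set $\cZ$ of size $O(\tau d)$, Lemma \ref{Map lowerbound} to control bad coordinates, and the same $\tau$--$\beta$ trade-off that converts the product hypothesis into the conclusion --- but it differs in two genuine ways. First, the paper handles the $\varphi$-dependence of the good set $\cW_\varphi$ by pruning a maximal $(4\varepsilon,\rho_\infty)$-separated set to a subset $\cF$ on which $\cW_\varphi$ is constant (paying a Stirling factor $e^{\beta d}$), then applying Lemma \ref{assigning} with $\cW=\cW_\varphi$ and injecting $\cF$ into $A^{\cZ}\times\prod_k\cD_k$, where the $\cD_k$ are separated subsets of $\cF$ itself; you instead apply the lemma once with $\eta=0$, fix the tiles independently of $\varphi$, and encode the bad set $B(\varphi)$ explicitly, paying a binomial coefficient $\binom{d}{|F|\delta d}$ that is killed by $\inf_\delta$ --- this mirrors the role of the map $\overrightarrow{h}$ in Theorem \ref{main 1} more faithfully and avoids the pruning step. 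Second, your AM--GM packaging isolates the clean quantitative inequality $h_\Sigma^\varepsilon(\rho)\le 2\sqrt{2\,h^\nv_{\varepsilon}(\rho)\log N_{\varepsilon}(X,\rho)}$ (up to fixed rescalings of $\varepsilon$), whereas the paper fixes $\beta$ first and defines $\tau$ by $N_{\varepsilon_i}(X,\rho)^{2\tau_i}=(1/\varepsilon_i)^{\beta}$ along a sequence extracted from the liminf; the two are equivalent, but your intermediate inequality is a nice explicit statement. One bookkeeping slip: you choose $F$ using $h^\nv_{\varepsilon/3}$ and bound $\log N_{\varepsilon/3}(X,\rho_{F_k})\le 2\beta|F_k|$, yet your encoding uses $(\varepsilon/6,\rho_{F_k})$-nets, whose cardinality is controlled by $N_{\varepsilon/6}$, not $N_{\varepsilon/3}$; start from $h^\nv_{\varepsilon/6}$ (or coarsen the nets and settle for bounding $N_{2\varepsilon}$ of the map space) and the constants line up, with no effect on the limit since the hypothesis is a liminf over all scales. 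You should also note the degenerate regime where the optimal $\tau=\sqrt{2h/L}$ would exceed $1$; there $h\le L$ gives $h^2\le hL$, so the hypothesis already forces $h^\nv_\varepsilon(\rho)/|\log\varepsilon|\to 0$ along the subsequence and the trivial bound $\tau$ close to $1$ suffices.
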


\begin{proof}
Fix $\beta > 0$. We shall prove that $\mdimsm(X, \rho) \leq 3\beta$. From the assumption there exist positive numbers $\varepsilon_i \to 0$ such that 
$$\frac{h^\nv_{\varepsilon_i}(\rho)}{|\log \varepsilon_i|}=o\left(\frac{|\log \varepsilon_i|}{\log N_{\varepsilon_i}(X, \rho)}\right).$$ Thus as $i$ is large enough, we have 
$$h^\nv_{\varepsilon_i}(\rho) < |\log \varepsilon_i|\tau_i\beta,$$
where $\tau_i$ sits inside $(0, 1)$ such that $N_{\varepsilon_i}(X, \rho)^{2\tau_i}=(1/\varepsilon_i)^\beta$. Furthermore, there exists $F^{(i)} \in \cF(\Gamma)$ such that 
\begin{equation} \label{separated control}
  \log N_{\varepsilon_i}(X, \rho_{F^{(i)}}) < |\log \varepsilon_i|\tau_i\beta |F^{(i)}|.
\end{equation}
Fixing $i$ large enough such that the above inequality holds, for simplicity, we may write $\varepsilon=\varepsilon_i, \tau=\tau_i$, and $F=F^{(i)}$ in the sequel.

By Lemma \ref{assigning}, as a map $\sigma \colon \Gamma \to \Sym(d)$ is good enough, for every $\cW \subseteq [d]$ with $|\cW| \geq (1 - \frac{\tau}{|F|+1})d$, there exist $\ell \in \bN, F_1, \dots, F_\ell \subseteq F, c_1, \dots, c_\ell \in \cW$, such that each $|F_k|\geq \tau|F|/2 $ for each $k =1, \dots, \ell$, and 
the sets $\{\sigma(F_k)c_k\}_{k=1}^\ell$ are pairwise disjoint with $|\bigsqcup_{k=1}^\ell \sigma(F_k)c_k| \geq (1-2\tau)d$.

Observe that for every $F' \subseteq F$ with $|F'|\geq \tau|F|/2$, by inequality (\ref{separated control}), we conclude
$$\log N_\varepsilon(X, \rho_{F'}) \leq \log N_\varepsilon(X, \rho_F) \leq |\log \varepsilon|\tau \beta |F| \leq 2|\log\varepsilon||F'|\beta.$$
In particular, for every $F_k, k=1, \dots, \ell$, we have
\begin{equation} \label{F_k control}
    \log N_\varepsilon(X, \rho_{F_k}) \leq 2|\log \varepsilon||F_k|\beta.
\end{equation}

Let $\delta=\delta_i \in (0, 1/2|F^{(i)}|)$ be small enough (to be further determined). We shall show as $\sigma$ is good enough,
$$N_{4\varepsilon_i}(\Map(\rho, F^{(i)}, \delta_i, \sigma), \rho_\infty) \leq e^{\beta d}\left(\frac{1}{\varepsilon_i}\right)^{3\beta d}.$$
It follows that
$$h_\Sigma^{4\varepsilon_i}(\rho) \leq h_\Sigma^{4\varepsilon_i}(\rho, F^{(i)}, \delta_i) \leq 3\beta|\log \varepsilon_i| +\beta$$
and hence
$$\mdimsm(\Gamma \curvearrowright X, \rho) \leq \varliminf_{i \to \infty} \frac{h_\Sigma^{4\varepsilon_i}(\rho)}{|\log(4\varepsilon_i)|} \leq 3\beta$$
as desired.

Choose $\cE \subseteq \Map(\rho, F, \delta, \sigma)$ as a maximal $(4\varepsilon, \rho_\infty)$-separated subset. We shall pick a nice subset $\cF \subseteq \cE$ such that the set
$$\cW_\varphi:=\{v \in [d]: \rho(s\varphi(v), \varphi(\sigma_s(v))) \leq \sqrt{\delta} {\rm \ for \ every \ } s \in F\}$$
stay the same for every $\varphi \in \cF$.
 Observe that for every $\varphi \in \Map(\rho, F, \delta, \sigma)$, by Lemma \ref{Map lowerbound}, we have $|\cW_\varphi| \geq (1 -|F|\delta)d$. By Stirling's approximation \cite[Lemma 10.1]{KLb}, as $d \to \infty$, for any $\lambda \in (0, 1)$,  there exists $\eta=\eta(\lambda) > 0$ such that 
$\lambda d\binom{d} {\lambda d} < e^{\eta d}$
and $\eta(\lambda)$ tends to zero as $\lambda \to 0$.
In particular, for $\lambda=|F|\delta$, as $d$ is sufficiently enough, we have
$$|\{\cV \subseteq [d]: |\cV| \geq (1-\lambda)d\}| \leq \sum_{j=0}^{\lambda d} \binom{d}{j} \leq \lambda d\binom{d} {\lambda d} < e^{\eta d}$$
for some $\eta=\eta(F, \delta) > 0$ that is independent of $d$. Note that for fixed $F$, as $\delta$ is small enough, $\eta$ can be arbitrarily small and hence $\eta < \beta$. 
It concludes that when $\delta $ is small enough and $d$ is sufficiently large, there exists a subset $\cF \subseteq \cE$ satisfying that
   \begin{itemize}
\item[i)] $|\cF| \geq |\cE|e^{-\beta d}$;\\
\item[ii)] the set $\cW_\varphi$ stays the same for every $\varphi \in \cF$, which we denote by $\cW$;\\
\item[iii)] $|\cW| \geq (1-\frac{\tau}{|F|+1})d$,  as $|F|\sqrt{\delta} < \frac{\tau}{|F|+1}$.
\end{itemize}
Applying Lemma \ref{assigning}, we obtain $F_k, c_k \in \cW$ as stated. 

For each $\mathcal{A} \subseteq [d]$ denote by $\rho_{\mathcal{A}}$ the pseudometric on $X^{[d]}$ by
$$\rho_\mathcal{A}(\varphi, \varphi')=\max_{v \in \mathcal{A}} \rho(\varphi(v), \varphi'(v)).$$
For each $k=1, \dots, \ell$, choose $\cD_k \subseteq \cF$ as a maximal $(2\varepsilon, \rho_{\sigma(F_k)c_k})$-separated subset of $\cF$.\\

{\bf Claim 1.} As $2\sqrt{\delta} < \varepsilon$, the set $\{\varphi(c_k): \varphi \in \cD_k\}$ is $(\varepsilon, \rho_{F_k})$-separated.\\

Consequently, by the inequality (\ref{F_k control}), we obtain
\begin{equation} \label{D_k control}
    |\cD_k|\leq N_\varepsilon(X, \rho_{F_k}) \leq e^{2\beta|F_k||\log \varepsilon|}=\left(\frac{1}{\varepsilon}\right)^{2\beta |F_k|}.
\end{equation}

Now let $A \subseteq X$ be a maximal $(2\varepsilon, \rho)$-separated subset. Put $\cZ=[d]\setminus \sqcup_{k=1}^\ell \sigma(F_k)c_k$. Then $A^\cZ$ is a $(2\varepsilon, \rho_\infty)$-spanning subset of $X^\cZ$. Since $\cD_k \subseteq \cF$ is $(2\varepsilon, \rho_{\sigma(F_k)c_k})$-spanning,  for each $\varphi \in \cF$, there exists $\varphi_k \in \cD_k$ such that $\rho_{\sigma(F_k)c_k}(\varphi, \varphi_k) < 2\varepsilon$.
Meanwhile, since $A^\cZ$ is $(2\varepsilon, \rho_\infty)$-spanning in $X^\cZ$, there exists $\varphi_0 \in A^\cZ$ such that $\rho_\infty(\varphi|_\cZ, \varphi_0) < 2\varepsilon$. It concludes that 
$$\rho_\infty(\varphi, ((\varphi_k|_{\sigma(F_k)c_k})_{1\leq k \leq \ell}, \varphi_0)) < 2\varepsilon.$$
Thus we obtain a map $\Phi \colon \cF \to A^\cZ \times \Pi_{k=1}^\ell \cD_k$ sending $\varphi$ to $(\varphi_0, (\varphi_k)_{1 \leq k \leq \ell})$.\\

{\bf Claim 2.} The map $\Phi$ is injective.\\

Therefore, by the inequality (\ref{D_k control}), we obtain
\begin{align*}
    N_{4\varepsilon}(\Map(\rho, F, \delta, \sigma), \rho_\infty)=&|\cE| \leq e^{\beta d} |\cF|\\
    &\leq  e^{\beta d} |A|^{|\cZ|}\Pi_{k=1}^\ell |\cD_k|\\
    &\leq  e^{\beta d} N_\varepsilon(X, \rho)^{2\tau d}\Pi_{k=1}^\ell N_\varepsilon(X, \rho_{F_k})\\
    &\leq e^{\beta d} \left(\frac{1}{\varepsilon}\right)^{\beta d+2\beta\sum_{k=1}^\ell |F_k|}  \\
    &\leq e^{\beta d}  \left(\frac{1}{\varepsilon_i}\right)^{3\beta d}
\end{align*}
as desired.

{\it Proof of Claim 1.} For every $\varphi, \varphi' \in \cD_k$ and $s \in F_k$, we want to show 
$$\rho(s\varphi(c_k), s\varphi'(c_k)) \geq \varepsilon.$$
Since $c_k \in \cW_\varphi=\cW_{\varphi'}$ and $\cD_k$ is $(2\varepsilon, \rho_{\sigma(F_k)c_k})$-separated, we have
\begin{align*}
    &\rho(s\varphi(c_k), s\varphi'(c_k))\\
    &\geq \rho(\varphi(\sigma_s(c_k)), \varphi'(\sigma_s(c_k))) -\rho(\varphi(\sigma_s(c_k)), s\varphi(c_k)) -\rho(\varphi'(\sigma_s(c_k)), s\varphi'(c_k))\\
    &\geq 2\varepsilon -2\sqrt{\delta} > \varepsilon.
\end{align*}

{\it Proof of Claim 2.} Assume $\Phi(\varphi)=\Phi(\varphi')$. By the definition of $\Phi$,  we have $\rho_\infty(\varphi|_\cZ, \varphi_0) < 2\varepsilon$ and $\rho_\infty(\varphi_0, \varphi'|_\cZ)=\rho_\infty(\varphi'_0, \varphi'|_\cZ) < 2\varepsilon$.
It follows that $\rho_\cZ(\varphi, \varphi') < 4\varepsilon$.
Similarly, we infer that $\rho_{\sigma(F_k)c_k}(\varphi, \varphi') < 4\varepsilon$. In summary, we obtain $\rho_\infty(\varphi, \varphi') < 4\varepsilon$. Since $\cF \subseteq \cE$ is $(4\varepsilon, \rho_\infty)$-separated, it forces that $\varphi=\varphi'$.

\end{proof}

We remark that Theorem \ref{decaying case} is also proved by Mantang Wei.


By a slight modification of \cite[Theorem 4.2]{LW00}, we have
\begin{theorem} \label{metric comparison}
    For any continuous action $\Gamma \curvearrowright X$ and a compatible metric $\rho$ on $X$, we have
    $\mdim^\nv(X) \leq \mdim_\rM^\nv(X, \rho)$. In particular, if $h^\nv(X) < \infty$, one has $\mdim^\nv(X)=0$.
\end{theorem}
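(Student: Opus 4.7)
The plan is to adapt the argument of \cite[Theorem 4.2]{LW00} comparing mean dimension and metric mean dimension to the naive setting, exploiting the fact that the underlying static estimate relating $\Wdim_\varepsilon(Y,d)$ to the covering number $N_\delta(Y,d)$ holds for each $F\in\cF(\Gamma)$ individually and thus behaves well when $\inf_F$ is applied on both sides.

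First, I would invoke the static metric-to-dimension estimate underlying \cite[Theorem 4.2]{LW00}: for any compact metric space $(Y,d)$ and scales $0<\delta<\varepsilon$, one has an inequality of the form
\[
\Wdim_\varepsilon(Y,d)\ \leq\ \frac{\log N_\delta(Y,d)}{\log(\varepsilon/(2\delta))},
\]
proved by embedding $Y$ into a polyhedron whose dimension is controlled by $\log N_\delta(Y,d)/\log(\varepsilon/(2\delta))$ via a careful construction from a $\delta$-net. Crucially, the denominator depends only on the ratio $\varepsilon/\delta$ and not on $Y$ itself, so the bound applies uniformly as we vary the metric.

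Next, I would apply this with $d=\rho_F$ for each $F\in\cF(\Gamma)$, divide by $|F|$, and take the infimum on both sides to obtain
\[
\mdim_\varepsilon^\nv(\rho)\ \leq\ \frac{h_\delta^\nv(\rho)}{\log(\varepsilon/(2\delta))}.
\]
Sending $\delta\to 0$ with $\varepsilon$ fixed, and using that $\log(\varepsilon/(2\delta))/\log(1/\delta)\to 1$, the $\varliminf$ of the right-hand side is bounded by $\mdim_\rM^\nv(\Gamma\curvearrowright X,\rho)$, giving $\mdim_\varepsilon^\nv(\rho)\leq \mdim_\rM^\nv(X,\rho)$. Taking $\sup_{\varepsilon>0}$ on the left yields the main inequality.

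For the last assertion, the hypothesis $h^\nv(X)<\infty$ produces a uniform bound $h_\delta^\nv(\rho)\leq h^\nv(X)$ for all $\delta>0$, so the displayed inequality gives $\mdim_\varepsilon^\nv(\rho)\leq h^\nv(X)/\log(\varepsilon/(2\delta))\to 0$ as $\delta\to 0$ with $\varepsilon$ fixed; taking the supremum over $\varepsilon>0$ and using the nonnegativity of $\mdim^\nv$ forces $\mdim^\nv(X)=0$. The main obstacle is verifying that the polyhedral construction behind the static inequality can be stated uniformly across the family $\{\rho_F\}_{F\in\cF(\Gamma)}$, whose diameters may blow up with $|F|$; however the construction in \cite{LW00} is local in $Y$ and depends only on $\varepsilon,\delta,N_\delta(Y,d)$, so it transfers without essential modification.
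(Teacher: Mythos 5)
Your argument is correct and is precisely the ``slight modification of \cite[Theorem 4.2]{LW00}'' that the paper invokes without supplying details: one applies the static estimate $\Wdim_\varepsilon(Y,d)\le \log N_\delta(Y,d)/\log(\varepsilon/(2\delta))$ to each metric $\rho_F$, notes that $\inf_F$ preserves the termwise inequality so that $\mdim_\varepsilon^\nv(\rho)\le h_\delta^\nv(\rho)/\log(\varepsilon/(2\delta))$, and lets $\delta\to 0$ (the exact constant in the denominator is immaterial in this limit). Your deduction of the final clause from the uniform bound $h_\delta^\nv(\rho)\le h^\nv(X)$ is likewise fine.
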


\begin{corollary}
If $\Gamma$ is amenable group and $h^\nv(X) < \infty$, then 
$$\mdim^\nv(X)=0=\mdim(X)=\mdim_\rM^\nv(X, \rho)$$
for any compatible metric $\rho$ on $X$;
If $\Gamma$ is nonamenable and $\mdim^\nv(X) > 0$, then 
$$\mdim^\nv(X)=+\infty =\mdim_\rM^\nv(X, \rho)$$
 for any compatible metric $\rho$ on $X$.
\end{corollary}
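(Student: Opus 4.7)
The plan is to combine Theorem \ref{metric comparison} and Proposition \ref{range} with the amenable-group input of Downarowicz--Frej--Romagnoli \cite{DFR16}; no new estimate is required.

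For the amenable case I would dispose of the three equalities in turn. First, $h^\nv(X)<\infty$ is exactly the hypothesis of the ``in particular'' clause of Theorem \ref{metric comparison}, so $\mdim^\nv(X)=0$. Second, for every $\varepsilon>0$ one has $h_\varepsilon^\nv(\rho)\leq h^\nv(X)<\infty$, and therefore
$$0\leq \mdim_\rM^\nv(X,\rho)=\varliminf_{\varepsilon\to 0}\frac{h_\varepsilon^\nv(\rho)}{\log(1/\varepsilon)}\leq \varliminf_{\varepsilon\to 0}\frac{h^\nv(X)}{\log(1/\varepsilon)}=0,$$
the lower bound coming from $N_\varepsilon(X,\rho_F)\geq 1$. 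Third, to obtain $\mdim(X)=0$ I would invoke \cite{DFR16}, which asserts that for continuous actions of amenable groups the naive topological entropy coincides with the classical topological entropy $h(X)$; thus $h(X)<\infty$, and the same direct computation gives $\mdim_\rM(X,\rho)\leq \varliminf_\varepsilon h(X)/\log(1/\varepsilon)=0$, so that the classical Lindenstrauss--Weiss inequality $\mdim(X)\leq \mdim_\rM(X,\rho)$ forces $\mdim(X)=0$.

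For the nonamenable case the argument is immediate. Proposition \ref{range} forces $\mdim^\nv(X)\in\{0,+\infty\}$, so the hypothesis $\mdim^\nv(X)>0$ upgrades to $\mdim^\nv(X)=+\infty$. The inequality in Theorem \ref{metric comparison} then yields $\mdim_\rM^\nv(X,\rho)\geq \mdim^\nv(X)=+\infty$ for every compatible metric $\rho$, forcing equality.

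The only nonformal step is the assertion $\mdim(X)=0$ in the amenable case, because Corollary \ref{amenable match} is restricted to algebraic actions and therefore cannot be used in the full generality of the statement; the detour through \cite{DFR16} together with the classical Lindenstrauss--Weiss inequality circumvents this limitation, and everything else is a direct invocation of inequalities already proved in the paper.
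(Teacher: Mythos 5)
Your proposal is correct and is essentially the argument the paper intends: the corollary is stated without proof immediately after Theorem \ref{metric comparison}, and the intended ingredients are exactly the ones you use (the ``in particular'' clause of Theorem \ref{metric comparison}, the bound $h_\varepsilon^\nv(\rho)\leq h^\nv(X)$ killing the naive metric mean dimension, Proposition \ref{range} in the nonamenable case, and the Downarowicz--Frej--Romagnoli identification $h(X)=h^\nv(X)$ for amenable groups feeding into the classical Lindenstrauss--Weiss inequality to get $\mdim(X)=0$). Your explicit remark that Corollary \ref{amenable match} cannot be used here because it is restricted to algebraic actions, and that the detour through \cite{DFR16} is therefore necessary, is a correct and worthwhile clarification of a step the paper leaves implicit.
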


\begin{remark}
     Similar to the naive mean dimension,  naive metric mean dimension can not be computed via dynamically generating pseudometric as well. For example, consider the full shift $\Gamma \curvearrowright [0, 1]^\Gamma$ in Remark \ref{pseudo example} again.  Then $ \mdim_\rM^\nv([0,1]^\Gamma, \rho) \leq 1$. However, by Theorem \ref{metric comparison} and Proposition \ref{nonamenable example}, whenever $\Gamma$ is nonamenable, we have 
   $$\mdim_\rM^\nv([0,1]^\Gamma, \widetilde{\rho}) \geq \mdim^\nv([0, 1]^\Gamma)=\infty.$$ 
\end{remark}

The following says that naive metric mean dimension does not decrease under taking factors in the situation that the factor map is Lipschitz.
\begin{proposition} \label{nondecrasing}
    Let $\pi \colon X \to Y$ be any factor map. Suppose that $\rho_X, \rho_Y$ are two compatible metrics on $X$ and $Y$ respectively such that $\pi$ is Lipschitz with respect to $\rho_X$ and $\rho_Y$, i.e. there exists $c > 0$ such that $\rho_Y(\pi(x), \pi(x')) \leq c\rho_X(x, x')$ for all $x, x' \in X$. Then we have
    $$\mdim_\rM^\nv(X, \rho_X) \geq \mdim_\rM^\nv(Y, \rho_Y).$$
\end{proposition}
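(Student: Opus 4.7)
The plan is to transfer separated sets from $Y$ back to $X$ along $\pi$ and control the scale change by the Lipschitz constant $c$. Concretely, I would first verify the scale-shift inequality
\[
N_\varepsilon(Y, \rho_{Y,F}) \leq N_{\varepsilon/c}(X, \rho_{X,F})
\]
for every $F \in \cF(\Gamma)$ and $\varepsilon > 0$. Indeed, given an $(\varepsilon, \rho_{Y, F})$-separated subset $Z \subseteq Y$, pick any section $\iota \colon Z \to X$ with $\pi \circ \iota = \id_Z$; for distinct $y_1, y_2 \in Z$ there is some $s \in F$ with $\rho_Y(sy_1, sy_2) \geq \varepsilon$, and then
\[
\rho_Y(\pi(s\iota(y_1)), \pi(s\iota(y_2))) \geq \varepsilon \implies \rho_X(s\iota(y_1), s\iota(y_2)) \geq \varepsilon/c
\]
by the Lipschitz hypothesis (using that $\pi$ is $\Gamma$-equivariant). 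Thus $\iota(Z)$ is $(\varepsilon/c, \rho_{X, F})$-separated.

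Next, taking the infimum over $F \in \cF(\Gamma)$ yields $h_\varepsilon^\nv(\rho_Y) \leq h_{\varepsilon/c}^\nv(\rho_X)$ directly from the definitions. Dividing by $\log(1/\varepsilon)$ and passing to $\varliminf_{\varepsilon \to 0}$ then gives
\[
\mdim_\rM^\nv(Y, \rho_Y) \leq \varliminf_{\varepsilon \to 0} \frac{h_{\varepsilon/c}^\nv(\rho_X)}{\log(1/\varepsilon)}.
\]
Substitute $\varepsilon' = \varepsilon/c$; since $\log(1/\varepsilon) = \log(1/\varepsilon') - \log c$ and $\log(1/\varepsilon')/\log(1/\varepsilon) \to 1$ as $\varepsilon' \to 0$, the right-hand side equals
\[
\varliminf_{\varepsilon' \to 0} \frac{h_{\varepsilon'}^\nv(\rho_X)}{\log(1/\varepsilon') - \log c} = \varliminf_{\varepsilon' \to 0} \frac{h_{\varepsilon'}^\nv(\rho_X)}{\log(1/\varepsilon')} = \mdim_\rM^\nv(X, \rho_X),
\]
which completes the proof.

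There is no real obstacle here; the argument is essentially the standard factor-map estimate for topological entropy adapted to the metric mean dimension scaling. The only mildly subtle point is the bookkeeping of the additive constant $\log c$ in the denominator when taking $\varliminf$, but since $\log(1/\varepsilon) \to \infty$ this constant is absorbed and does not affect the limit. One should also briefly remark, for completeness, that without the Lipschitz hypothesis the inequality can fail because a factor map may compress scales arbitrarily in a non-uniform way, which is exactly why the assumption appears in the statement.
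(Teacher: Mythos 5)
Your proof is correct and follows essentially the same route as the paper: lift an $(\varepsilon,\rho_{Y,F})$-separated set to an $(\varepsilon/c,\rho_{X,F})$-separated set using a section of $\pi$ together with equivariance and the Lipschitz bound, deduce $h^\nv_{\varepsilon/c}(\rho_X)\geq h^\nv_\varepsilon(\rho_Y)$, and conclude. The only difference is that you spell out the harmless $\log c$ shift in the denominator when passing to the $\varliminf$, which the paper leaves implicit.
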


\begin{proof}
    Let $\varepsilon > 0$ and $F \in \cF(\Gamma)$. Lift an $(\varepsilon, \rho_{Y, F})$-separated subset $E$ of $Y$ up to a subset $\widetilde{E} \subseteq \pi^{-1}(E)$ such that $|\widetilde{E}|=|E|$. By our assumption on metrics it follows that $\widetilde{E}$ is $(\varepsilon/c, \rho_{X, F})$-separated and hence $N_{\varepsilon/c}(X, \rho_{X, F}) \geq N_\varepsilon(Y, \rho_{Y,F})$. Taking infimum over $F \in \cF(\Gamma)$ we obtain $$h^\nv_{\varepsilon/c}(\rho_X) \geq h^\nv_\varepsilon(\rho_Y),$$
    which finishes the proof.
\end{proof}

\begin{example}
    We exhibit an example with positive naive metric mean dimension but zero sofic metric mean dimension. For free group $\Gamma=F_2$ with two generators $a$ and $b$, consider $\cM_1=\ZG$ as a $\ZG$-submodule of $\cM_2:=\ZG/\ZG(a-1)$. Correspondingly it induces a canonical factor map $\pi \colon X:=\widehat{\cM_2} \to Y:=\widehat{\cM_1}$ between Pontryagin dual groups. From the proof of \cite[Lemma 10.5]{LL19}, there exist dynamically generating  pseudometric $\rho_X$ and  $\rho_Y$ on $X$ and $Y$ respectively such that $\rho_Y(\pi(x), \pi(x')) \leq \rho_X(x, x')$ for all $x, x' \in X$. It follows that $\pi$ is Lipschitz with respect to $\widetilde{\rho_X}$ and $\widetilde{\rho_Y}$. By Proposition \ref{nondecrasing}, we obtain
    $$\mdim_\rM^\nv(X, \widetilde{\rho_X}) \geq \mdim_\rM^\nv(Y, \widetilde{\rho_Y})\geq \mdim^\nv(Y)=\infty.$$

    On the other hand, from \cite[Lemma 10.7]{LL19} and \cite[Example 6.3]{LL19}, we have 
    $$\mdim_{\Sigma, \rM}(X, \widetilde{\rho_X})=\mrk_\Sigma(\cM_2)=0.$$
\end{example}

In light of \cite[Theorem 1.3]{LL19} and \cite[Remark 10.2]{LL19}, we have
\begin{theorem} \label{sofic metric coincide}
    For any algebraic action $\Gamma \curvearrowright X$ of a sofic group, one has $\mdims(X)=\mdimsm(X, \rho)$ for some compatible metric $\rho$ on $X$.
\end{theorem}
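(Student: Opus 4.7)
The plan is to follow the same blueprint as Theorem \ref{sofic coincide}, now on the metric side: identify $\mdimsm(X, \rho)$ with the sofic mean rank $\mrks(\widehat{X})$ of the dual module for a suitably chosen compatible metric $\rho$ on $X$, and then combine with the equality $\mdims(X) = \mrks(\widehat{X})$ already furnished by Theorem \ref{sofic coincide}. The resulting chain $\mdims(X) = \mrks(\widehat{X}) = \mdimsm(X, \rho)$ is exactly the desired statement.

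To construct $\rho$, write $\cM := \widehat{X}$ as an increasing union $\cM = \bigcup_{n} \sA_n$ of finitely generated abelian subgroups, where each $\sA_n$ comes equipped with chosen generators $a_{n,1}, \ldots, a_{n,k_n}$. Via the Pontryagin pairing, each $\sA_n$ induces a continuous translation-invariant pseudometric on $X$ by
$$\rho_{\sA_n}(\chi, \chi') := \max_{1 \le j \le k_n} \|\chi(a_{n,j}) - \chi'(a_{n,j})\|_{\bR/\bZ},$$
and the weighted sum $\rho := \sum_{n=1}^{\infty} 2^{-n}\rho_{\sA_n}$ is a compatible metric on $X$ because $\bigcup_n \sA_n = \cM$ already separates distinct characters. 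This is the standard choice appearing in \cite[Remark 10.2]{LL19}, and it is well adapted to the sofic rank calculus because each factor $\rho_{\sA_n}$ is built directly from a generating set of the $\ZG$-module presentation.

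The core input is then \cite[Theorem 1.3]{LL19}, which, at each fixed scale $\varepsilon$, controls the $\varepsilon$-separated counts in $\Map(\rho_{\sA_n}, F, \delta, \sigma_i)$ under $\rho_\infty$ by $\exp(d_i \cdot \mrks(\cM)\cdot |\log \varepsilon| + o(d_i |\log \varepsilon|))$. Dividing the logarithm by $|\log \varepsilon|$ and taking the appropriate infima in $F, \delta$ and limits in $i$, followed by $\varepsilon \to 0$, yields $\mdimsm(X, \rho_{\sA_n}) = \mrks(\cM)$ in the pseudometric formulation; \cite[Remark 10.2]{LL19} then explains how to promote this identity from the individual pseudometrics $\rho_{\sA_n}$ to the genuine compatible metric $\rho$ above, since the scales of $\rho$ and of $\rho_{\sA_n}$ agree up to a factor uniform over all of $X$ once $n$ is fixed and $\varepsilon$ is small.

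The main obstacle is the upper bound $\mdimsm(X, \rho) \le \mrks(\cM)$. Unlike the topological $\Wdim_\varepsilon$ situation, one must quantitatively convert $\varepsilon$-separated subsets of $\Map(\rho, F, \delta, \sigma_i)$ into rank bounds on $\sM(\sA_n, \sB, F, \sigma_i) \subseteq \cM^{d_i}/\sM(\sB, F, \sigma_i)$, while simultaneously absorbing the tail $\sum_{m > n} 2^{-m}\rho_{\sA_m}$ which perturbs the effective $\varepsilon$-scale. This is precisely the difficulty handled in \cite{LL19}: one truncates to a finite initial segment of generators once $\varepsilon$ is fixed, and then uses a quantitative Pontryagin duality between approximately equivariant maps $\varphi \colon [d_i] \to X$ and finite-rank quotients of $\cM^{d_i}$ to trade topological separation at scale $\varepsilon$ for algebraic rank contributions of order $|\log \varepsilon|$. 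Inserting this bound into the definition of $\mdimsm$ closes the inequality and completes the proof.
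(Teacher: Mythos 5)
Your proposal is correct and follows essentially the same route as the paper, which proves this theorem purely by citing \cite[Theorem 1.3]{LL19} and \cite[Remark 10.2]{LL19} to get $\mdimsm(X,\rho)=\mrks(\widehat{X})$ for the canonical metric and then combining with $\mdims(X)=\mrks(\widehat{X})$ from Theorem \ref{sofic coincide}. Your additional detail on the construction of $\rho$ and the mechanism inside \cite{LL19} is a faithful elaboration of what those references supply, not a different argument.
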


\begin{proposition} \label{algebraic action comparision}
    Let $\Gamma \curvearrowright X$ be an algebraic action of a sofic group $\Gamma$. Then 
    $$\mdimsm(\Gamma \curvearrowright X) \leq \mdim_\rM^\nv(\Gamma \curvearrowright X).$$
\end{proposition}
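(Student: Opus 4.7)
The plan is to obtain the inequality as a short concatenation of three results already established earlier in the paper, rather than by direct argument. First, by Theorem \ref{sofic metric coincide} applied to the algebraic action $\Gamma \curvearrowright X$, there exists a compatible metric $\rho_0$ on $X$ for which
$$\mdims(\Gamma \curvearrowright X) = \mdimsm(\Gamma \curvearrowright X, \rho_0).$$
Since $\mdimsm(\Gamma \curvearrowright X)$ is defined as the infimum of $\mdimsm(\Gamma \curvearrowright X, \rho)$ over all compatible metrics $\rho$ on $X$, and $\rho_0$ is one such metric, this immediately yields
$$\mdimsm(\Gamma \curvearrowright X) \leq \mdimsm(\Gamma \curvearrowright X, \rho_0) = \mdims(\Gamma \curvearrowright X).$$

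Second, I would invoke Corollary \ref{main 2}, which states that for any algebraic action of a sofic group, $\mdims(\Gamma \curvearrowright X) \leq \mdim^\nv(\Gamma \curvearrowright X)$. Third, Theorem \ref{metric comparison} gives the metric-topological bound $\mdim^\nv(\Gamma \curvearrowright X) \leq \mdim_\rM^\nv(\Gamma \curvearrowright X, \rho)$ for every compatible metric $\rho$ on $X$; taking the infimum over $\rho$ then yields $\mdim^\nv(\Gamma \curvearrowright X) \leq \mdim_\rM^\nv(\Gamma \curvearrowright X)$. Chaining the three inequalities gives
$$\mdimsm(\Gamma \curvearrowright X) \leq \mdims(\Gamma \curvearrowright X) \leq \mdim^\nv(\Gamma \curvearrowright X) \leq \mdim_\rM^\nv(\Gamma \curvearrowright X),$$
which is exactly the claim.

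Since the heavy lifting has already been done (the algebraic variational-type identity in Theorem \ref{sofic metric coincide}, the algebraic sofic-versus-naive comparison from the naive mean rank framework in Corollary \ref{main 2}, and the general naive-versus-naive-metric comparison in Theorem \ref{metric comparison}), I do not anticipate a genuine obstacle; the proposition is essentially a bookkeeping corollary of the machinery developed earlier. The only point worth double-checking is that the metric $\rho_0$ furnished by Theorem \ref{sofic metric coincide} is a \emph{compatible} metric on $X$ (rather than merely a continuous pseudometric), so that it is admissible in the infimum defining $\mdimsm(\Gamma \curvearrowright X)$; but this is part of the statement of Theorem \ref{sofic metric coincide}, so the bound goes through with no further work.
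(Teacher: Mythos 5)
Your proposal is correct and follows exactly the same route as the paper's own proof, which likewise chains Theorem \ref{sofic metric coincide}, Corollary \ref{main 2}, and Theorem \ref{metric comparison} and then passes to the infimum over compatible metrics. If anything, your version is slightly more careful than the paper's one-line argument, since you explicitly note that the equality from Theorem \ref{sofic metric coincide} only needs to be used as an upper bound witness for the infimum defining $\mdimsm(\Gamma \curvearrowright X)$.
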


\begin{proof}
By Theorem \ref{sofic metric coincide}, Corollary \ref{main 2} and Theorem \ref{metric comparison}, we conclude
$$\mdimsm(\Gamma \curvearrowright 
X)=\mdims(\Gamma \curvearrowright X)\leq \mdim^\nv( \Gamma \curvearrowright X)\leq \mdim_\rM^\nv(\Gamma \curvearrowright X, \rho)$$
for any compatible metric  $\rho$ on $X$.



\end{proof}

The following result provides a partial answer to a question raised by Xiaojun Huang.
\begin{proposition}
    Let $\Gamma \curvearrowright X$ be an algebraic action by an amenable group. Then $$\mdim^\nv(X)=\mdim_\rM^\nv(X, d)$$
for some compatible metric $d$ on $X$.
\end{proposition}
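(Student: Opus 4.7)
The plan is to sandwich $\mdim_\rM^\nv(X, d)$ between $\mdim^\nv(X)$ (from below) and $\mdim_\rM(X, d)$ (from above), and then choose the compatible metric $d$ so that these outer bounds already coincide.

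For the upper bound, one shows that for every compatible metric $\rho$ on $X$, one has $\mdim_\rM^\nv(X, \rho) \leq \mdim_\rM(X, \rho)$, where $\mdim_\rM$ is the classical metric mean dimension for amenable group actions. Indeed, the function $F \mapsto \log D_\varepsilon(X, \rho_F)$, where $D_\varepsilon$ is the minimal number of sets of $\rho_F$-diameter $< \varepsilon$ covering $X$, is genuinely subadditive (intersecting two such covers produces a cover for $\rho_{F_1 \cup F_2}$), so Lemma \ref{OW} yields that its limit along Følner sets equals its infimum. Combining with the standard sandwich $N_\varepsilon(X, \rho_F) \leq D_\varepsilon(X, \rho_F) \leq N_{\varepsilon/2}(X, \rho_F)$, the $\varepsilon$-rescaling becomes harmless after dividing by $|\log \varepsilon|$ and taking $\varliminf_{\varepsilon \to 0}$, giving the desired inequality $\mdim_\rM^\nv(X, \rho) \leq \mdim_\rM(X, \rho)$.

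For the lower bound, Theorem \ref{metric comparison} gives $\mdim^\nv(X) \leq \mdim_\rM^\nv(X, d)$ for every compatible $d$. It thus remains to produce a specific $d$ with $\mdim^\nv(X) = \mdim_\rM(X, d)$. Since every amenable group is sofic via the sofic approximation $\Sigma$ induced by a Følner sequence, and since with this $\Sigma$ the sofic mean dimension $\mdims$ reduces to classical mean dimension $\mdim$ while the sofic metric mean dimension $\mdimsm(\cdot, \rho)$ reduces to the classical metric mean dimension $\mdim_\rM(\cdot, \rho)$, applying Theorem \ref{sofic metric coincide} to the algebraic action $\Gamma \curvearrowright X$ furnishes a compatible metric $d$ satisfying $\mdim(X) = \mdim_\rM(X, d)$. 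By Corollary \ref{amenable match}, $\mdim(X) = \mdim^\nv(X)$.

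Combining the three ingredients yields
$$\mdim^\nv(X) \leq \mdim_\rM^\nv(X, d) \leq \mdim_\rM(X, d) = \mdim(X) = \mdim^\nv(X),$$
forcing all four quantities to agree, which is what we want. The main point requiring care is the amenable specialization used to extract $d$: one must verify that under the Følner-built sofic approximation the sofic invariants $\mdims$ and $\mdimsm$ reduce to their classical counterparts, so that Theorem \ref{sofic metric coincide} can indeed be invoked in this form. This is a known comparison, parallel to the analogous collapse of sofic topological entropy to classical amenable topological entropy along Følner sofic approximations.
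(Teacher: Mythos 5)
Your sandwich
$$\mdim^\nv(X) \leq \mdim_\rM^\nv(X, d) \leq \mdim_\rM(X, d) = \mdim(X) = \mdim^\nv(X)$$
is exactly the paper's, and the first two links (Theorem \ref{metric comparison}, and the inf-versus-F{\o}lner-limit observation, which you justify in more detail than the paper's ``by definition'') and the last link (Corollary \ref{amenable match}) match the paper's proof. Where you diverge is in producing the metric $d$ with $\mdim_\rM(X,d)=\mdim(X)$: the paper simply cites \cite[Theorem 7.2]{LL18}, which asserts precisely this for algebraic actions of amenable groups, whereas you detour through sofic theory via Theorem \ref{sofic metric coincide}. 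That detour works, but one of the two ``amenable collapses'' you invoke needs more care than you give it. The collapse $\mdimsm(\cdot,\rho)=\mdim_\rM(\cdot,\rho)$ for amenable groups is indeed a theorem of Li (\cite[Section 5]{Li13}). However, the collapse $\mdims=\mdim$ is \emph{not} a ``known comparison'' for general actions of amenable groups --- only the inequality $\mdims\leq\mdim$ is established (cf.\ \cite{Li13, JQ21}), and the equality is a well-known open problem, so it is not ``parallel to the collapse of sofic entropy'' as you claim. Your argument is rescued only because the action is algebraic: then $\mdims(X)=\mrks(\widehat X)$ by Theorem \ref{sofic coincide}, $\mrks(\widehat X)=\mrk(\widehat X)$ for amenable $\Gamma$ by \cite[Theorem 5.1]{LL19}, and $\mrk(\widehat X)=\mdim(X)$ by \cite[Theorem 4.1]{LL18}. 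You should make this mean-rank bridge explicit (or just cite \cite[Theorem 7.2]{LL18} as the paper does); as written, the justification of the step $\mdims(X)=\mdim(X)$ appeals to a fact that is false-or-unknown in the generality you state it.
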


\begin{proof}
    Observe that by definition, for any compatible metric $\rho$ on $X$, one has
    $$\mdim_\rM^\nv(X, \rho) \leq \mdim_\rM(X, \rho).$$
 By \cite[Theorem 7.2]{LL18}, there exists a compatible metric $d$ on $X$ such that  $$\mdim_\rM(X, d)=\mdim(X).$$
 Thus in light of Theorems  \ref{metric comparison}
and Corollary \ref{amenable match}, the conclusion follows.
\end{proof}

\section{Naive small boundary property}
To end this paper, we consider a natural notion of naive small boundary property in connection with systems of zero mean dimension.   Let $\Gamma \curvearrowright X$ be any continuous action on a compact metrizable space by a countable group $\Gamma$.
\begin{definition}
   For any set $A \subseteq X$, we define the {\it naive orbit capacity of $A$} as
    $$\ocap^\nv(A):=\inf_{F \in \cF(\Gamma)} \sup_{x \in X} \frac{\sum_{s \in F} 1_A(sx)}{|F|}.$$
    We say that the action $\Gamma \curvearrowright X$ has the {\it naive small-boundary property}  if for every point $x \in X$ and every neighborhood $U$ of $x$ there exists a neighborhood $V \subseteq U$ of $x$ such that $$\ocap^{\nv}(\partial V)=0.$$
\end{definition}

As $\Gamma$ is amenable, Lindenstrauss-Weiss defined that $\Gamma \curvearrowright X$ has the {\it small boundary property} if for every point $x \in X$ and every neighborhood $U$ of $x$ there exists a neighborhood $V \subseteq U$ of $x$ such that $$\ocap(\partial V)=0.$$
Here the {\it orbit capacity of $A$} is defined as
    $$\ocap(A):=\lim_F \sup_{x \in X} \frac{\sum_{s \in F} 1_A(sx)}{|F|}.$$

Denote by $M_\Gamma(X)$ the set of all $\Gamma$-invariant Borel probability measures on $X$. Recall that in \cite[Definition 8.1]{Li13}, Li defined that a continuous action $\Gamma \curvearrowright X$ has the {\it small-boundary property} (SBP) if for every point $x \in X$ and every neighborhood $U$ of $x$ there exists a neighborhood $V \subseteq U$ of $x$ such that  
$$\mu(\partial V)=0$$
for every $\mu \in M_\Gamma(X)$.

By definition it is clear that the naive SBP always implies Li's SBP. From \cite[Lemma 2.9]{DHZ19} and  \cite[Lemma 3.2]{KS20}, we see that as $\Gamma$ is amenable, naive SBP coincides with the SBP, and Li's SBP. 
Since SBP implies the system is of zero mean dimension, we conclude that 
$$\mdim(\Gamma \curvearrowright X)=0=\mdim^\nv(\Gamma \curvearrowright X).$$
for any continuous action $\Gamma \curvearrowright X$ with SBP.
 Conversely one may ask whether systems of zero mean dimension imply the SBP. Such a situation holds for $\Gamma=\mathbb{Z}^d$ and $\Gamma \curvearrowright X$ has the Marker property \cite[Corollary 5.4]{GLT16} or more generally $\Gamma \curvearrowright X$ is free and has the uniform Rokhlin property \cite[Theorem 5.1]{N21}.

\begin{question}
    Does Li's SBP imply naive SBP? Is there any counterexample for this implication?
\end{question}

Modifying the proof of \cite[Theorem 5.4]{LW00} we obtain
\begin{theorem}
    If a continuous action $\Gamma \curvearrowright X$ has the naive small boundary property, then $\mdim^\nv(\Gamma \curvearrowright X)=0$. 
\end{theorem}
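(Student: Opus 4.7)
The proof should adapt the argument in \cite[Theorem 5.4]{LW00} to the naive setting. Fix a compatible metric $\rho$ on $X$; it suffices to show $\mdim_\varepsilon^\nv(\rho) = 0$ for each $\varepsilon > 0$, which reduces to producing, for every $\delta > 0$, some $F \in \cF(\Gamma)$ with $\Wdim_\varepsilon(X, \rho_F) < \delta|F|$.

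First, I would use the naive small boundary property pointwise together with compactness of $X$ to fix a finite open cover $V_1, \dots, V_n$ of $X$ with $\mathrm{diam}_\rho V_i < \varepsilon$ and $\ocap^\nv(\partial V_i) = 0$ for every $i$. The central intermediate step is to combine these individual vanishings into a \emph{single} uniform estimate: produce $F \in \cF(\Gamma)$ with
$$\sup_{x \in X}\frac{1}{|F|}\#\{s \in F : sx \in {\textstyle\bigcup_{i=1}^n \partial V_i}\} < \eta,$$
where $\eta$ will be chosen small in terms of $\delta$ and $n$. This is the principal obstacle, and the one place where the argument genuinely departs from the amenable case: in the Ornstein--Weiss regime one obtains subadditivity of orbit capacity on finite unions essentially for free from the F\o{}lner limit, whereas the naive orbit capacity is defined by an infimum over $\cF(\Gamma)$ and does not obviously commute with finite unions. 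I would establish the above uniform bound by proving directly that $\ocap^\nv$ is subadditive on finite unions: given witnesses $F_i$ with $\phi_{\partial V_i}(F_i) := \sup_x \frac{1}{|F_i|}\#\{s \in F_i : sx \in \partial V_i\} < \eta/n$, use the translation invariance $\phi_A(gF) = \phi_A(F)$ and the fact that disjoint unions of translates of a witness remain witnesses (an easy computation) to freely enlarge each $F_i$, then combine them into a common $F$, for instance via the product set $F = F_1 F_2 \cdots F_n$ with the $F_i$ first conjugated by suitable elements of $\Gamma$ so that all products $f_1 f_2 \cdots f_n$ are distinct, ensuring $|F| = \prod |F_i|$; an iterated application of the translation-invariance bound then yields $\phi_{\partial V_i}(F) \leq \eta/n$ for each $i$ and hence the desired uniform estimate.

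Given such an $F$, follow the standard LW construction. After slightly refining the cover so that the supports of a partition of unity $\{\zeta_i\}_{i=1}^n$ subordinate to $\{V_i\}$ pairwise overlap only near $\bigcup_i \partial V_i$, define the continuous map
$$\Phi \colon X \to ([0,1]^n)^{F}, \qquad \Phi(x)_s = (\zeta_i(sx))_{i=1}^n.$$
Since $\mathrm{diam}_\rho V_i < \varepsilon$ for all $i$, $\Phi$ is automatically an $(\varepsilon, \rho_F)$-embedding. The image lies in a polyhedron whose local dimension at $\Phi(x)$ equals
$$\sum_{s \in F}\bigl(|\{i : \zeta_i(sx) > 0\}| - 1\bigr) \leq (n-1)\cdot\#\{s \in F : sx \in {\textstyle\bigcup_i \partial V_i}\},$$
so $\dim \Phi(X) < (n-1)\eta|F|$. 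Taking $\eta = \delta/(n-1)$ gives $\Wdim_\varepsilon(X, \rho_F) < \delta|F|$, as required; letting $\delta \to 0$ and then $\varepsilon \to 0$ completes the proof.

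The main obstacle is clearly the combining step above---establishing the uniform bound on $\bigcup_i \partial V_i$ by orchestrating the individual witnesses without recourse to a F\o{}lner averaging argument---whereas the subsequent LW-style embedding/dimension estimate is mostly mechanical once the combined bound is in hand.
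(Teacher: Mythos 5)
Your overall architecture is faithful to Lindenstrauss--Weiss and is surely what the authors intend, and you are right that the only genuinely new point is producing a \emph{single} $F$ that controls $\bigcup_i\partial V_i$. But that is exactly where the proposal fails, and it fails for a reason that cannot be repaired by better bookkeeping. First, the invariance you invoke is on the wrong side: writing $\phi_A(F)=\sup_{x\in X}\frac{1}{|F|}\sum_{s\in F}1_A(sx)$, the substitution $y=gx$ gives $\phi_A(Fg)=\phi_A(F)$, whereas $\phi_A(gF)=\phi_{g^{-1}A}(F)$, which the hypothesis says nothing about; the same computation shows that conjugating a witness destroys the witness property. Consequently the product set $F=F_1F_2\cdots F_n$ controls only $\phi_{\partial V_1}$ (it is a union of right translates $F_1b$), while for $i\geq 2$ it decomposes into two-sided translates $aF_ib$ with $\phi_{\partial V_i}(aF_ib)=\phi_{a^{-1}\partial V_i}(F_i)$, which is uncontrolled.

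More seriously, the lemma you propose to establish---that $\ocap^{\nv}$ is subadditive on finite unions---is false. Let $\Gamma=F_2=\langle a,b\rangle$ act on its Gromov boundary $X$ (the compact metrizable space of infinite reduced words, $s\cdot\xi$ being the reduced concatenation), and let $A$ (resp.\ $B$) be the clopen set of words beginning with $a^{\pm1}$ (resp.\ $b^{\pm1}$). For $F=\{b,b^2,\dots,b^N\}$ and any $\xi\in X$ the word $b^i\xi$ begins with $b^{\pm1}$ for all but at most one value of $i$, so $\sum_{i=1}^{N}1_A(b^i\xi)\leq 1$ and $\ocap^{\nv}(A)\leq 1/N\to 0$; symmetrically $\ocap^{\nv}(B)=0$. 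Yet $A\cup B=X$, so $\ocap^{\nv}(A\cup B)=1$. (These $A,B$ are clopen, hence not themselves boundaries, so this does not refute the theorem---but it does refute the combining lemma your proof rests on.) Any correct argument must therefore extract the uniform bound on $\bigcup_i\partial V_i$ from something beyond the separate vanishing of the $\ocap^{\nv}(\partial V_i)$ for one fixed cover, e.g.\ from the freedom the naive SBP gives in re-choosing the $V_i$. The remaining embedding and dimension count is standard (modulo arranging that the supports of the $\zeta_i$ overlap only inside a set already controlled by the chosen finite $F$, which for fixed $F$ one gets by fattening closed boundaries by compactness), but without a valid combining step the proof does not go through.
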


\end{document}